\newtheorem*{acknowledgement}{Acknowledgement}
\newtheorem{corollary}{Corollary}
\newtheorem{lemma}{Lemma}
\newtheorem{proposition}{Proposition}
\newtheorem{remark}{Remark}
\newtheorem{theorem}{Theorem}
\numberwithin{equation}{section}
\begin{document}
	
	\title[Quasi-Einstein manifolds]{Geometric inequalities for quasi-Einstein manifolds}

	\author{Rafael Di\'ogenes, Jaciane Gon\c calves and Ernani Ribeiro Jr}

	\address[R. Di\'ogenes]{UNILAB, Instituto de Ci\^encias Exatas e da Natureza, Rua Jos\'e Franco de Oliveira, s/n, 62790-970, Reden\c{c}\~ao / CE, Brazil.}\email{rafaeldiogenes@unilab.edu.br}
	
		\address[J. Gon\c calves]{Universidade Federal do Cear\'a - UFC, Departamento  de Matem\'atica, Campus do Pici, Av. Humberto Monte, Bloco 914, 60455-760, Fortaleza / CE, Brazil}\email{mjaciane05@alu.ufc.br}

	\address[E. Ribeiro Jr]{Universidade Federal do Cear\'a - UFC, Departamento  de Matem\'atica, Campus do Pici, Av. Humberto Monte, Bloco 914, 60455-760, Fortaleza / CE, Brazil}\email{ernani@mat.ufc.br}
	
		\thanks{R. Di\'ogenes was partially supported by CNPq/Brazil [Grant: 310680/2021-2].}
	
	\thanks{J. Gon\c calves was partially supported by CAPES/Brazil - Finance Code 001 and CNPq/Brazil.}	
	
	\thanks{E. Ribeiro was partially supported by CNPq/Brazil [309663/2021-0 \& 403344/2021-2], CAPES/Brazil and FUNCAP/Brazil [ITR-0214-00116.01.00/23].}
	
	\begin{abstract}
		In this article, we investigate certain geometric inequalities on quasi-Einstein manifolds. We use the generalized Reilly’s formulas by Qiu-Xia and Li-Xia to establish new boundary estimates and an isoperimetric type inequality for compact quasi-Einstein manifolds with boundary. Boundary estimates in terms of the first eigenvalue of the Jacobi operator and the Hawking mass are also established. In particular, we present a Heintze-Karcher type inequality for compact domains in quasi-Einstein manifolds. 
	\end{abstract}
	
	\date{\today}
	
	\keywords{quasi-Einstein manifolds; isoperimetric inequality; boundary estimates; Heintze-Karcher inequality.}
		
	\subjclass[2020]{Primary 53C20, 53C25; Secondary 53C65.}
	
	\maketitle
	
\section{Introduction}\label{SecInt}

A complete Riemannian manifold $(M^n,\,g),$ $n\geq 2,$ will be called \textit{m-quasi-Einstein manifold}, or simply \textit{quasi-Einstein manifold}, if there exists a smooth potential function $f$ on $M^n$ such that
\begin{equation}
\label{eq1F}
Ric_{f}^{m}=Ric+\nabla^2 f -\frac{1}{m}df \otimes df=\lambda g,
\end{equation} for some constants $\lambda$ and $m>0$ (cf. \cite{Besse,Case-Shu-Wei}). Here, $\nabla^2 f$ stands for the Hessian of $f$ and $Ric$ is the Ricci tensor of $g.$ In this case, $f$ is a quasi-Einstein potential. We say that a quasi-Einstein manifold is \textit{trivial} if its potential function $f$ is constant, otherwise, we say that it is \textit{nontrivial}. 

The $m$-Bakry-\'Emery Ricci tensor $Ric_{f}^{m}$ is a natural object in geometric analysis and it is directly related to the study of smooth metric measure spaces and diffusion operators by Bakry and \'Emery \cite{BaEm}. Observe that $\infty$-quasi-Einstein manifold is precisely a gradient Ricci soliton. Moreover, when $m=1,$ we consider in addition that  $\Delta e^{-f}+\lambda e^{-f}=0$ in order to obtain the {\it static spaces}, which have special interest due to the connection with general relativity. 
As discussed by Besse \cite[pg. 267]{Besse}, a quasi-Einstein manifold is a base of a warped product Einstein metric. Besides, quasi-Einstein manifolds have attracted interest in physics due to their relation with the geometry of a degenerate Killing horizon and horizon limit (see, e.g., \cite{BGKW,BGKW2,Wylie}). Notice that, for $m<\infty,$ we may consider the function $u=e^{-\frac{f}{m}}$ on $M^n,$ which in turn implies that $\nabla u=-\frac{u}{m}\nabla f$ and hence, (\ref{eq1F}) becomes
\begin{equation}
\label{eq1F2}
\nabla^2u=\frac{u}{m}(Ric-\lambda g).
\end{equation}

In \cite{He-Petersen-Wylie,He-Petersen-Wylie-2}, He, Petersen and Wylie started a fruitful study of quasi-Einstein manifolds with boundary. In this context, a Riemannian manifold $(M^n,\,g)$ with (nonempty) boundary $\partial M$ is an \textit{m-quasi-Einstein manifold} if there exists a smooth potential function $u$ on $M^n$ satisfying the system
\begin{equation}
\label{eq-quasi-einstein}
\left\{%
\begin{array}{lll}
    \displaystyle \nabla^{2}u = \dfrac{u}{m}(Ric-\lambda g) & \hbox{in $M,$} \\
    \displaystyle u>0 & \hbox{on $int(M),$} \\
        \displaystyle u=0 & \hbox{on $\partial M.$} \\
    \end{array}%
\right.
\end{equation} According to \cite[Theorem 4.1]{He-Petersen-Wylie}, nontrivial compact $m$-quasi-Einstein manifolds with (non-empty) boundary $\partial M$ have necessarily $\lambda>0.$ Nontrivial examples of compact and noncompact quasi-Einstein manifolds $(M^n,\,g,\,u,\,\lambda)$ can be found in, e.g., \cite{Besse,Bohm,Bohm2,Case,Case2,Case-Shu-Wei,Catino-Carlo-Mazziere-Rimoldi,RTE,He-Petersen-Wylie,Lu,Ernani_Keti,Rimoldi,L-F-Wang}. For what follows, it is important to recall some of them. 
\begin{enumerate}
\item {\it Compact with boundary}:
\begin{itemize}
\item[(i)] the standard hemisphere $\left(\mathbb{S}_+^n, \, dr^2+\sin^2(r) g_{\mathbb{S}^{n-1}},\, u=\cos(r),\,\lambda>0 \right),$ where $r$ is a height function with $r\leq \frac{\pi}{2};$ 
\item[(ii)]  $\left(I\times\Bbb{S}^{n-1}, \,dt^2+\frac{n-2}{\lambda}g_{\Bbb{S}^{n-1}},\, u(t,x)=\sin\left(c\,t\right),\,\lambda>0\right),$ where $c>0$ is a cons\-tant; 
\item[(iii)]    $\left(\Bbb{S}^{p+1}_+\times\Bbb{S}^q,\,dr^2+\sin^2r g_{\Bbb{S}^p}+\frac{q-1}{p+m}g_{\Bbb{S}^q},\,u(x,y)=\cos (r(x)), \,\lambda=p+m\right),$
where $r$ is a height function on $\Bbb{S}^{p+1}_+$ with $r\leq \frac{\pi}{2}$ and $q>1.$
\end{itemize}

\item {\it Noncompact}:
\begin{itemize}
\item[(a)]  $\left([0,\infty)\times F,\, dt^2+g_{F},\, u(t,x)=Ct,\,\lambda=0\right);$ 
\item[(b)]  the hyperbolic space $\left(\mathbb{H}^n, \ dt^2+\sqrt{-k}\sinh^2(\sqrt{-k} t)g_{\mathbb{S}^{n-1}},\,u=C\cosh (\sqrt{-k}t),\, \lambda<0\right);$ 
\item[(c)]  $\left([0,\infty)\times N, dt^2+\sqrt{-k}\cosh^2(\sqrt{-k}t)g_{\mathbb{S}^{n-1}},\,u(t,x)=C\sinh(\sqrt{-k}t), \,\lambda<0\right);$ 
\item[(d)]  $\left(\mathbb{R}\times F, \ dt^2+e^{2\sqrt{-k}t}g_{F},\, u(t,x)=Ce^{2\sqrt{-k}t},\, \lambda<0\right),$ 
\end{itemize} where $F$ is Ricci flat, $N$ is an Einstein metric with negative Ricci curvature, $C$ is an arbitrary positive constant and $k=\frac{\lambda}{m+n-1}.$ 
\end{enumerate} 

He, Petersen and Wylie \cite{He-Petersen-Wylie} proved that a nontrivial quasi-Einstein manifold with cons\-tant Ricci curvature must be isometric to either Example
${\rm (i)},$ or ${\rm (a)},$ or ${\rm (b)},$ or ${\rm (c)},$ or ${\rm (d)}.$ In the case of compact manifolds with boundary, Costa, Ribeiro and Zhou \cite{CRZ2024} recently showed that a $3$-dimensional simply connected compact quasi-Einstein manifold with boundary and constant scalar curvature must be isometric to either Example ${\rm (i)},$ or ${\rm (ii)}.$ For dimension $n=4,$ they proved that a $4$-dimensional simply connected compact quasi-Einstein manifold with boundary and constant scalar curvature is isometric to either Example ${\rm (i)},$ or ${\rm (ii)},$ or  ${\rm (iii)}.$ Despite of these results, the classification of compact quasi-Einstein manifolds with boundary and constant scalar curvature remains open for dimensions $n\geq 5.$

Geometric inequalities are fundamental objects of study in geometry. Such inequalities are useful in proving novel obstruction results and put away possible new examples of a special kind of manifolds (or metric). Among the geometric inequalities that motivate the present work, we highlight the isoperimetric and Heintze-Karcher inequalities. The isoperimetric inequality is one of the oldest and most famous inequality in geometry. In $\mathbb{R}^{n},$ the isoperimetric inequality asserts that if $M\subset\mathbb{R}^{n}$ is a compact domain with smooth boundary $\partial M,$ then
\begin{equation}
|\partial M|\geq \frac{|\partial\mathbb{B}_{1}^{n}|}{Vol(\mathbb{B}_{1}^{n})^{\frac{n-1}{n}}}\, Vol(M)^{\frac{n-1}{n}},
\end{equation} where $|\partial M|$ denotes the $(n-1)$-dimensional volume of $\partial M$ and $Vol(M)$ is the volume of $M.$ In special cases, the $n$-dimensional isoperimetric inequality is equivalent to the Sobolev inequality on $\Bbb{R}^n$ (see \cite{Ma} and \cite{Ros}). On the other hand, the Heintze-Karcher inequality asserts that, for a bounded domain $\Omega$ in $\Bbb{R}^n$ with smooth and strictly mean-convex boundary $\partial \Omega,$ namely, $H>0$ on $\partial \Omega,$ 	
\begin{equation}
\label{eqHK1}
\int_{\partial \Omega}\frac{1}{H}\,dS \geq \frac{n+1}{n}|\Omega|,
 \end{equation} where $H$ is the mean curvature of the $\partial \Omega.$ This inequality was first outlined by Heintze and Karcher \cite{HK} in 1978 and in the form (\ref{eqHK1}) by Ros \cite{Ros2} in 1987.

 In recent years, significant progress has been made in the study of boundary and volume estimates for special classes of manifolds (or metrics), such as static spaces, $V$-static spaces, Einstein-type manifolds and critical metrics. For instance, motivated by the classical isoperimetric inequality and a result due Shen \cite{Shen} and Boucher, Gibbons and Horowitz \cite{BGH}, it was  established some boundary and volume estimates for critical metrics of the volume funcional in \cite{Baltazar-Diogenes-Ribeiro,BLF20,BS,BDRR,Corvino-Eichmair-Miao,FY,Miao-Tam 2009,Yuan} and for static spaces in \cite{Lucas,BMa,BFP,BGH,CDPR,HMR,Kwong-Miao,Qiu-Xia}, etc. In the same spirit, Di\'ogenes, Gadelha and Ribeiro \cite{RTE2} showed that a compact $m$-quasi-Einstein manifold with connected boundary and constant scalar curvature must satisfy
		\begin{eqnarray}\label{eqT1}
			|\partial M|\leq n\sqrt{\frac{\lambda}{m+n-1}}Vol(M).
		\end{eqnarray} 
		
		In our first result, we shall establish a new estimate for the area of the boundary of a compact $m$-quasi-Einstein manifold. In particular, no assumption on the scalar curvature is assumed. More precisely, we have the following result.

	\begin{theorem}
	\label{Theo2}
		 Let $(M^n,\,g,\,u,\,\lambda),$ $n\geq 3,$ be a compact, oriented $m$-quasi-Einstein manifold with connected boundary and $m>1.$ Then we have
		 \begin{eqnarray}
		 \label{eqTeo2}
		 	|\partial M|\leq \frac{n\lambda}{\sqrt{\left(\lambda_1(m+n-1)+\lambda\right)(m+n-1)}}Vol(M),
		 \end{eqnarray} where $\lambda_1$ is the first non-zero eigenvalue of the Laplacian operator. Moreover, if equality holds in (\ref{eqTeo2}), then $(M^n,\,g)$ is isometric, up to scaling, to the standard hemisphere $\mathbb{S}^n_+.$
	\end{theorem}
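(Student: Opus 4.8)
The plan is to combine a sharp analysis of the boundary geometry with a weighted Reilly identity and the variational characterization of $\lambda_1$. First I would record the boundary facts that make everything work. Since $u=0$ on $\partial M$, the field equation gives $\nabla^2 u=\frac{u}{m}(Ric-\lambda g)=0$ along $\partial M$; writing $\nabla u=(\partial_\nu u)\,\nu$ there and evaluating $\nabla^2 u$ on tangential directions shows the second fundamental form vanishes, so $\partial M$ is totally geodesic. Differentiating $\partial_\nu u=\langle\nabla u,\nu\rangle$ along $\partial M$ and using both $\nabla^2 u=0$ on $\partial M$ and the vanishing of the shape operator then shows $\partial_\nu u$ is a (negative) constant on the connected boundary. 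Tracing the equation yields $\Delta u=\frac{u}{m}(R-n\lambda)$, and integrating over $M$ gives the basic identity $|\partial_\nu u|\,|\partial M|=\frac{1}{m}\int_M u\,(n\lambda-R)\,dV$, which ties $|\partial M|$ to a bulk integral with no hypothesis on the scalar curvature $R$.

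The analytic engine is a generalized Reilly formula. I would apply the Qiu-Xia weighted identity with weight $V=u$ to a carefully chosen test function $h$. The point of this weight is that $u$ vanishes on $\partial M$: every boundary term carrying a factor $u$ disappears, and since $\partial M$ is totally geodesic the remaining curvature boundary contributions drop out as well, leaving precisely $(\partial_\nu u)\int_{\partial M}|\nabla^{\partial M}z|^2\,dA$ with $z=h|_{\partial M}$. This is the exact place where the spectrum enters: estimating $\int_{\partial M}|\nabla^{\partial M}z|^2\,dA\ge\lambda_1\int_{\partial M}z^2\,dA$ through the Rayleigh quotient on the closed boundary injects the first nonzero eigenvalue $\lambda_1$ into the inequality. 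On the interior side I would substitute $\nabla^2 u=\frac{u}{m}(Ric-\lambda g)$ together with $Ric_f^m=\lambda g$ (equivalently, use the Li-Xia weighted formula for the $m$-Bakry-\'Emery structure) and apply the refined Cauchy-Schwarz inequality adapted to that structure, which upgrades the naive bound $|\nabla^2 h|^2\ge\frac{(\Delta h)^2}{n}$ to an inequality with effective dimension $m+n$. This is what produces the factors $m+n-1$ in the statement.

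Combining these ingredients, the bulk of the Reilly identity is controlled below by a multiple of $\lambda|\nabla h|^2$ (coming from $Ric_f^m=\lambda g$), the boundary term contributes the $\lambda_1(m+n-1)$ piece, and the trace identity of the first paragraph feeds in the volume and the area. A concluding Cauchy-Schwarz, in which one factor assembles $\lambda_1(m+n-1)+\lambda$ and the other the dimensional weight $m+n-1$, then yields $|\partial M|^2\big(\lambda_1(m+n-1)+\lambda\big)(m+n-1)\le n^2\lambda^2\,Vol(M)^2$, which is (\ref{eqTeo2}). I expect the main obstacle to be the choice of the test function $h$ and the bookkeeping of the interior term: one must pick $h$ so that $z=h|_{\partial M}$ is admissible for the $\lambda_1$-estimate while the bulk integrand, after inserting the quasi-Einstein equation, collapses to a clean multiple of $\lambda|\nabla h|^2$ with the correct sign for an upper bound on $|\partial M|$.

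Finally, for the rigidity I would track the equality cases of the three inequalities used. Equality in the refined $(m+n)$-Cauchy-Schwarz forces $\nabla^2 h$ to be pure trace, which through the field equation forces $Ric=\frac{R}{n}g$, i.e. $(M,g)$ is Einstein; equality in the eigenvalue step forces $z$ to be a first eigenfunction; and equality in the final Cauchy-Schwarz pins down the proportionality of the relevant quantities. An Einstein quasi-Einstein manifold has constant Ricci curvature, so by the classification of He-Petersen-Wylie it must be one of the model Examples ${\rm (i)}$, ${\rm (a)}$--${\rm (d)}$; among these the only compact one with connected boundary is the standard hemisphere $\mathbb{S}^n_+$, giving the stated rigidity up to scaling.
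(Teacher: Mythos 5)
There is a genuine gap here, and it starts with a misidentification of $\lambda_1$. You propose to inject the spectrum through the Rayleigh quotient of the Laplacian \emph{on the closed boundary} $\partial M$ (estimating $\int_{\partial M}|\nabla^{\partial M}z|^2\,dA\geq \lambda_1\int_{\partial M}z^2\,dA$), but the $\lambda_1$ in the theorem is an eigenvalue of the Laplacian on $M$ itself: the paper uses the Poincar\'e inequality $\int_M|\nabla u|^2\,dV_g\geq \lambda_1\int_M u^2\,dV_g$ applied directly to the potential $u$, which is admissible precisely because $u\in H_0^{1,2}(M)$ ($u=0$ on $\partial M$). So even if your Reilly-based scheme closed, it would prove a bound in terms of a boundary eigenvalue, i.e.\ a different inequality. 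Moreover the scheme does not close as written: the ``carefully chosen test function $h$'' is never exhibited, the claimed collapse of the bulk integrand to ``a clean multiple of $\lambda|\nabla h|^2$'' is exactly the hard step and is left unverified, and there is a sign obstruction --- with weight $V=u$ the surviving boundary term carries the factor $\partial_\nu u=-|\nabla u|<0$ along $\partial M$, so it enters with the wrong sign for an \emph{upper} bound on $|\partial M|$. Finally, you never invoke the scalar curvature lower bound $R\geq \frac{n(n-1)}{m+n-1}\lambda$ of (\ref{limitacaoR}): your integrated trace identity $|\nabla u|_{\partial M}\,|\partial M|=\frac{1}{m}\int_M u(n\lambda-R)\,dV_g$ is correct but inert without it, since nothing else controls $n\lambda-R$; that bound is what yields $\int_M u\,dV_g\geq \frac{m+n-1}{n\lambda}|\nabla u|_{\partial M}\,|\partial M|$ and the coefficient $\frac{m-1}{m+n-1}\lambda$.

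For contrast, the paper's proof of this particular theorem uses no Reilly formula at all (Qiu--Xia and Li--Xia enter only in Theorems \ref{theo4} and \ref{teoHK1}). It integrates the first integral (\ref{eqmu}), $u\Delta u+(m-1)|\nabla u|^2+\lambda u^2=\mu$, over $M$, applies (\ref{eq-laplac}) and (\ref{limitacaoR}), then the Dirichlet--Poincar\'e inequality for $u$, H\"older's inequality, Stokes' theorem, and the identity $\mu=(m-1)|\nabla u|^2$ on $\partial M$ obtained by evaluating (\ref{eqmu}) at the boundary. In the equality case one gets $R=\frac{n(n-1)}{m+n-1}\lambda$, then (\ref{laplacianoR}) forces $\mathring{Ric}=0$, and Proposition 3.1 of He--Petersen--Wylie gives the hemisphere. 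Your rigidity paragraph has the right endgame (Einstein plus the He--Petersen--Wylie classification), but it rests on the unestablished main inequality and on the unjustified claim that equality in your refined Cauchy--Schwarz for $h$ forces $Ric=\frac{R}{n}g$; in the paper Einstein is deduced from the scalar curvature equality via (\ref{laplacianoR}), not from a Hessian of an auxiliary function.
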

	
\begin{remark}
Notice that the estimate (\ref{eqTeo2}) improves (\ref{eqT1}). Indeed, we have
\begin{equation*}
\frac{n\lambda}{\sqrt{(\lambda_1(m+n-1)+\lambda)(m+n-1)}}< \frac{n\lambda}{\sqrt{\lambda(m+n-1)}}= n\sqrt{\frac{\lambda}{m+n-1}}.
\end{equation*}
\end{remark}

In the sequel, we investigate an isoperimetric type inequality for compact $m$-quasi-Einstein manifolds with connected boundary and constant scalar curvature. To do so,  we first establish a lower bound estimate (depending on the volume of $M^n$) for the area of the boundary. To be precise, we have the following result.

\begin{theorem}
\label{theo4}
	 Let $(M^n,\,g,\,u,\,\lambda),$ $n\geq 3,$ be a compact, oriented $m$-quasi-Einstein manifold with connected boundary, constant scalar curvature $R$ and $m>1.$ Then we have:
	 	\begin{equation}\label{eqtheo4}
	 	|\partial M|\geq \frac{(m-1)\alpha}{(m-2)\alpha+\lambda m}\sqrt{\frac{2\lambda mn+\alpha\big(n(m-1)+4\big)}{2n(m+1)m}}Vol(M),
	 \end{equation}
	 where  $\alpha=n\lambda-R$ is a positive constant. Moreover, if equality holds in (\ref{eqtheo4}), then $M^n$ is isometric, up to scaling, to the standard hemisphere $\mathbb{S}^n_+.$
\end{theorem}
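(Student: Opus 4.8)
The plan is to convert the field equation \eqref{eq-quasi-einstein} into pointwise and integral identities for the potential $u$, reduce the area bound to a single moment inequality for $u$, and then feed the constant scalar curvature hypothesis into a generalized Reilly/Bochner argument to pin down the sharp constant. First I would extract the two pointwise consequences of \eqref{eq1F2} that drive everything. Tracing the equation gives $\Delta u=\frac{u}{m}(R-n\lambda)=-\frac{\alpha}{m}u$, so $u$ is a first Dirichlet eigenfunction with eigenvalue $\alpha/m$. Taking the divergence of $\nabla^{2}u=\frac{u}{m}(Ric-\lambda g)$ and using $\operatorname{div}Ric=\tfrac12\nabla R=0$ (this is exactly where constant scalar curvature enters) yields
\begin{equation*}
Ric(\nabla u)=\frac{\alpha-\lambda}{m-1}\,\nabla u,
\end{equation*}
so $\nabla u$ is an eigenvector of $Ric$. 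Substituting this into $\nabla|\nabla u|^{2}=2\nabla^{2}u(\nabla u,\cdot)$ shows that $|\nabla u|^{2}+\frac{\lambda m-\alpha}{m(m-1)}\,u^{2}$ is constant on $M$. Since $\partial M$ is connected and $u=0$ there, $c:=|\nabla u|$ is a nonzero constant on $\partial M$; moreover $\nabla^{2}u=0$ on $\partial M$, whence $\partial M$ is totally geodesic and $H\equiv0$.

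Next I would record the two integral identities that reduce the theorem to a pure moment inequality. Integrating $\Delta u=-\frac{\alpha}{m}u$ over $M$ and using $u_{\nu}=-c$ on $\partial M$ gives $|\partial M|=\frac{\alpha}{mc}\int_{M}u$. Integrating the conservation law above and combining with $\int_{M}|\nabla u|^{2}=\frac{\alpha}{m}\int_{M}u^{2}$ gives
\begin{equation*}
c^{2}\,Vol(M)=\frac{(m-2)\alpha+\lambda m}{m(m-1)}\int_{M}u^{2}.
\end{equation*}
Eliminating $c$ between these two identities turns \eqref{eqtheo4} into the equivalent statement
\begin{equation*}
\frac{\int_{M}u}{\sqrt{\big(\int_{M}u^{2}\big)\,Vol(M)}}\ \ge\ \sqrt{\frac{(m-1)m\,K}{(m-2)\alpha+\lambda m}},\qquad K=\frac{2\lambda mn+\alpha(n(m-1)+4)}{2n(m+1)m},
\end{equation*}
and note that the prefactor $\frac{(m-1)\alpha}{(m-2)\alpha+\lambda m}$ of \eqref{eqtheo4} then appears automatically.

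For the remaining moment inequality I would invoke the generalized Reilly formulas of Qiu--Xia and Li--Xia. Applying Reilly's identity to $u$ (using $u=0$ and $H=0$ on $\partial M$) gives $\int_{M}(\Delta u)^{2}=\int_{M}|\nabla^{2}u|^{2}+\int_{M}Ric(\nabla u,\nabla u)$, into which I would insert the Ricci-eigenvector identity together with the Newton/Cauchy--Schwarz bound $|\nabla^{2}u|^{2}\ge\frac1n(\Delta u)^{2}$; combining these with the hierarchy of moment relations obtained by integrating $u^{k}|\nabla u|^{2}$ by parts (which, via the conservation law, express $\int_{M}u^{k+2}$ through $\int_{M}u^{k}$) should produce $K$. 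I expect this last bookkeeping to be the main obstacle: one must organize the weighted Reilly term, the Hessian inequality, and the moment identities so that the algebra collapses to the displayed $K$ rather than to a weaker constant, and keeping track of the weight $u$ (which vanishes on $\partial M$, so that only its normal derivative survives in the boundary integral) is the delicate point.

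Finally, for rigidity I would trace the equalities backward. Equality in \eqref{eqtheo4} forces equality in $|\nabla^{2}u|^{2}\ge\frac1n(\Delta u)^{2}$, hence $\nabla^{2}u=\frac{\Delta u}{n}g=-\frac{\alpha}{mn}\,u\,g$. This is an Obata-type equation for a nonconstant function vanishing on $\partial M$, and by the Reilly/Obata rigidity theorem for manifolds with boundary it forces $(M^{n},g)$ to be, up to scaling, the round hemisphere $\mathbb{S}^{n}_{+}$, which concludes the argument.
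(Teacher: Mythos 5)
Your preparatory work is correct, and in fact it reproduces the paper's scaffolding in a cleaner pointwise form: with $R$ constant, (\ref{eqK1a}) gives $Ric(\nabla u)=\frac{\alpha-\lambda}{m-1}\nabla u$ everywhere, your conservation law $|\nabla u|^{2}+\frac{\lambda m-\alpha}{m(m-1)}u^{2}\equiv c^{2}$ is exactly the constant-$R$ form of (\ref{eqmu}), and your two integral identities $c\,|\partial M|=\frac{\alpha}{m}\int_{M}u\,dV_g$ and $c^{2}Vol(M)=\frac{(m-2)\alpha+\lambda m}{m(m-1)}\int_{M}u^{2}dV_g$ agree with (\ref{eq3.8}) via $\mu=(m-1)c^{2}$; your reduction of (\ref{eqtheo4}) to the moment inequality $\left(\int_{M}u\,dV_g\right)^{2}\geq\frac{m(m-1)K}{(m-2)\alpha+\lambda m}\int_{M}u^{2}dV_g\,Vol(M)$ is also algebraically correct. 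The genuine gap is precisely at the step you flag as ``bookkeeping'': the tools you list cannot reach that inequality. The \emph{unweighted} Reilly identity applied to $u$ is contentless in this setting: since $Ric(\nabla u,\nabla u)=\frac{\alpha-\lambda}{m-1}|\nabla u|^{2}$ pointwise and $\int_{M}|\nabla u|^{2}dV_g=\frac{\alpha}{m}\int_{M}u^{2}dV_g$, every term in $\int_{M}(\Delta u)^{2}=\int_{M}|\nabla^{2}u|^{2}+\int_{M}Ric(\nabla u,\nabla u)$ is a constant multiple of $\int_{M}u^{2}dV_g$, so inserting $|\nabla^{2}u|^{2}\geq\frac{1}{n}(\Delta u)^{2}$ collapses, with all moments cancelling, to the scalar constraint $\alpha\leq\frac{nm\lambda}{m+n-1}$ --- nothing but the known bound (\ref{limitacaoR}). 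Moreover your moment recursions are parity-preserving: they link $Vol(M)\to\int u^{2}\to\int u^{4}\to\cdots$ and $\int u\to\int u^{3}\to\cdots$ in separate chains, while the target inequality compares the two chains. No combination of a parity-preserving recursion with a scalar constraint can cross that divide.

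The missing ingredient is a single cross-parity inequality, namely H\"older in the form $\int_{M}u^{2}dV_g\leq\left(\int_{M}u^{3}dV_g\right)^{1/2}\left(\int_{M}u\,dV_g\right)^{1/2}$, which is exactly the paper's (\ref{eq3.9}). Once you add it, your route actually closes and is genuinely different from the paper's: your $k=1$ recursion gives the exact identity $\int_{M}u^{3}dV_g=\frac{2m(m-1)c^{2}}{(m-3)\alpha+2\lambda m}\int_{M}u\,dV_g$, and feeding this together with H\"older into your two identities yields $|\partial M|\geq\frac{(m-1)\alpha}{(m-2)\alpha+\lambda m}\sqrt{\frac{(m-3)\alpha+2\lambda m}{2m(m-1)}}\,Vol(M)$, whose constant \emph{dominates} the one in (\ref{eqtheo4}); the comparison of the two constants is equivalent to $\alpha\leq\frac{nm\lambda}{m+n-1}$, i.e.\ to the output of your Reilly step. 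The paper, by contrast, never uses the Ricci-eigenvector identity globally: it runs Bochner with weight $u$ together with the \emph{weighted} ($f=u$) Qiu--Xia formula (Lemma \ref{lema1}) to obtain $c^{3}|\partial M|\geq\frac{\left(2nm\lambda+\alpha(n(m-1)+4)\right)\alpha}{2nm^{2}(m+1)}\int_{M}u^{3}dV_g$, and only then applies H\"older --- the weight $u$, not the unweighted identity you wrote, is what produces $\int u^{3}$ and the constant $K$. Your rigidity mechanism is fine and is the paper's: equality forces $\mathring{\nabla}^{2}u=0$, hence $M^n$ Einstein by (\ref{RicciSemtrac}), and the hemisphere follows from Proposition 3.1 of He--Petersen--Wylie (equivalently, your Obata-type equation $\nabla^{2}u=-\frac{\alpha}{mn}u\,g$ with $u>0$ inside and $u=0$ on $\partial M$).
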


A relevant observation is that the proof of Theorem \ref{theo4} is inspired by the work of Baltazar, Di\'ogenes and Ribeiro \cite{Baltazar-Diogenes-Ribeiro} combined with a generalized Reilly's formula obtained by Qiu and Xia \cite{Qiu-Xia}. As a consequence of Theorem \ref{theo4}, we obtain the following isoperimetric type inequality for compact $m$-quasi-Einstein manifolds with connected boundary and constant scalar curvature.

\begin{corollary}
\label{corol1}
	Let $(M^n,\,g,\,u,\,\lambda),$ $n\geq 3,$ be a compact, oriented $m$-quasi-Einstein manifold with connected boundary, constant scalar curvature $R$ and $m>1.$ Then we have:
	\begin{equation}
	\label{cor1}
		|\partial M|\geq \Lambda(\alpha,m,n,u)^{\frac{1}{n}}Vol(M)^{\frac{n-1}{n}},
	\end{equation} where 
\begin{equation*}
\label{}
	\Lambda(\alpha,m,n,u) =\frac{\alpha}{m|\nabla u|_{\mid_{_{\partial M}}}}\left(\frac{(m-1)\alpha}{(m-2)\alpha+\lambda m}\sqrt{\frac{2\lambda mn+\alpha(n(m-1)+4)}{2n(m+1)m}}\right)^{n-1}\int_{M}udV_g
\end{equation*} and $\alpha=n\lambda-R.$ Moreover, if equality holds in (\ref{cor1}), then $M^n$ is isometric, up to scaling, to the standard hemisphere $\mathbb{S}^n_+.$
 \end{corollary}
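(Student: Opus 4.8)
The plan is to reduce Corollary \ref{corol1} to Theorem \ref{theo4} by establishing an exact integral identity for $|\partial M|$. To lighten the notation, write
\begin{equation*}
C=\frac{(m-1)\alpha}{(m-2)\alpha+\lambda m}\sqrt{\frac{2\lambda mn+\alpha(n(m-1)+4)}{2n(m+1)m}},
\end{equation*}
so that Theorem \ref{theo4} reads $|\partial M|\geq C\,Vol(M)$ and the quantity defining the corollary becomes $\Lambda(\alpha,m,n,u)=\frac{\alpha}{m|\nabla u|_{\mid_{_{\partial M}}}}\,C^{\,n-1}\int_M u\,dV_g$. The key observation will be that the prefactor $\frac{\alpha}{m|\nabla u|_{\mid_{_{\partial M}}}}\int_M u\,dV_g$ is nothing but $|\partial M|$ itself.

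First I would derive the pointwise equation for $u$. Tracing the fundamental identity $\nabla^2u=\frac{u}{m}(Ric-\lambda g)$ and using that the scalar curvature is the constant $R$ gives $\Delta u=\frac{u}{m}(R-n\lambda)=-\frac{\alpha}{m}u$, with $\alpha=n\lambda-R>0$. Integrating over $M$ and applying the divergence theorem yields $\int_{\partial M}\partial_\nu u\,dS=-\frac{\alpha}{m}\int_M u\,dV_g$, where $\nu$ is the outward unit normal. Since $u=0$ on $\partial M$ and $u>0$ in the interior, the gradient $\nabla u$ is normal to $\partial M$ and inward-pointing, so $\partial_\nu u=-|\nabla u|$ on the boundary. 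Invoking the structural fact (He--Petersen--Wylie \cite{He-Petersen-Wylie}) that $|\nabla u|$ is constant along the connected boundary, the boundary integral collapses to $-|\nabla u|_{\mid_{_{\partial M}}}\,|\partial M|$, and we arrive at the exact identity
\begin{equation*}
|\partial M|=\frac{\alpha}{m|\nabla u|_{\mid_{_{\partial M}}}}\int_M u\,dV_g.
\end{equation*}

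With this identity in hand, the definition of $\Lambda$ simplifies dramatically: substituting shows $\Lambda=|\partial M|\,C^{\,n-1}$. Raising the target inequality $|\partial M|\geq\Lambda^{1/n}Vol(M)^{(n-1)/n}$ to the $n$-th power and inserting $\Lambda=|\partial M|\,C^{\,n-1}$, one factor of $|\partial M|$ cancels, leaving exactly $|\partial M|^{\,n-1}\geq C^{\,n-1}Vol(M)^{\,n-1}$, that is, $|\partial M|\geq C\,Vol(M)$. This is precisely the content of Theorem \ref{theo4}, so the inequality (\ref{cor1}) follows. Because every step in this chain is an equivalence (an exact identity for $|\partial M|$, a substitution, and a monotone power operation), equality in (\ref{cor1}) holds if and only if equality holds in (\ref{eqtheo4}); the rigidity statement of Theorem \ref{theo4} then forces $M^n$ to be isometric, up to scaling, to the hemisphere $\mathbb{S}^n_+$.

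The computations are entirely routine, and I do not expect a genuine obstacle here: the corollary is a formal consequence of Theorem \ref{theo4}. The only non-formal input is the constancy of $|\nabla u|$ on the connected boundary, which is what turns the divergence-theorem step into a clean identity rather than a mere inequality; this is the crux that converts the linear estimate of Theorem \ref{theo4} into the sharp isoperimetric-type bound.
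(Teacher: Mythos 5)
Your proposal is correct and follows essentially the same route as the paper: the authors likewise write $|\partial M|^n=|\partial M|\,|\partial M|^{n-1}$, replace one factor of $|\partial M|$ by the exact identity $|\partial M|=\frac{\alpha}{m|\nabla u|_{\mid_{_{\partial M}}}}\int_M u\,dV_g$ (obtained from $\Delta u=-\frac{\alpha}{m}u$, Stokes' theorem, and the constancy of $|\nabla u|$ on the connected boundary), and then apply Theorem \ref{theo4} to the remaining factor, with the rigidity inherited from that theorem. Your observation that the chain of steps is an equivalence, so equality in (\ref{cor1}) reduces exactly to equality in (\ref{eqtheo4}), matches the paper's (more tersely stated) treatment of the equality case.
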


	\begin{remark}
	\label{rem2a}
	It would be interesting to investigate whether the constant $\Lambda(\alpha,m,n,u)$ in (\ref{cor1}) can be improved to depend only on the dimension $n,$ $m$ and the volume of the unit ball. We highlight that $|\nabla u|_{\mid_{_{\partial M}}}$ is a positive constant along $\partial M$ (see Propositions 2.2 and 2.3 in \cite{He-Petersen-Wylie}). Also, it follows from \cite[Corollary 4.3]{He-Petersen-Wylie} that if a nontrivial compact quasi-Einstein manifold with (nonempty) boundary has constant scalar curvature $R,$ then $R < n\lambda.$ Whence, $\alpha>0.$
	\end{remark}
		
It is known that the boundary $\partial M$ of a quasi-Einstein manifold is totally geodesic with the induced metric (see Section \ref{sec2}). Therefore, it is natural to seek for an estimate for the area of the boundary $\partial M$ in terms of the eigenvalue of Jacobi operator of $\partial M.$ We recall that given an arbitrary function $\varphi\in C^{\infty}(\partial M),$ the Jacobi operator (or stability operator) $J$ acting in $\varphi$ is given by

\begin{eqnarray*}
    J({\varphi})=\Delta_{_{\partial M}}\varphi +(Ric(\nu,\nu)+|\mathbb{II}|^2)\varphi,
\end{eqnarray*} where $\Delta_{_{\partial M}}$ stands for the Laplacian operator on $\partial M$, $Ric(\nu,\nu)$ is the Ricci curvature of $M$ in the direction of the outward unit normal vector field $\nu$ and $\mathbb{II}$ is the second fundamental form of $\partial M.$ Besides, let $\beta_1$ be the first eigenvalue of the Jacobi operator $J,$ i.e., $J(\varphi)=-\beta_1 \varphi.$ Thus, $\beta_1$ is given by
\begin{eqnarray}\label{eq-egvle}
    \beta_1=\inf_{\varphi\neq 0} \frac{-\int_{\partial M}\varphi J(\varphi) dS_g}{\int_{\partial M}\varphi^2 dS_g}.
\end{eqnarray} Specifically, the \textit{index} of $\partial M$ is the number of (counted with multiplicity) negative eigenvalues of $J.$ In \cite[Theorem 1.10]{BS}, Barros and Silva proved an estimate for the area of the boundary of a static space involving the first eigenvalue of the Jacobi operator, provided that the boundary $\partial M$ is Einstein and $\inf R^{\partial M}>0.$ A similar result was obtained by Costa et al. \cite{CDPR} for a static perfect fluid space-time under the same conditions. In the following, we shall establish a boundary estimate for a compact quasi-Einstein manifold with boundary in terms of the first eigenvalue of the Jacobi operator $\beta_1.$ 

\begin{theorem}
\label{thmJac}
Let $(M^n,\,g,\,u,\,\lambda),$ $n\geq 3,$ be a compact $m$-quasi-Einstein manifold with connected Einstein boundary and $m>1.$ Then we have:
\begin{equation}
\label{eqk8Er}
|\partial M|\leq \left(\frac{(n-1)(n-2)(m+n-1)}{2(m+n-1)\beta_{1}+n(n-1)\lambda}\right)^{\frac{n-1}{2}} \omega_{n-1},
\end{equation} where $\omega_{n-1}$ denotes the volume of the round unit sphere $\Bbb{S}^{n-1}.$ Moreover, if equality holds in (\ref{eqk8Er}), then $\partial M$ is isometric to the round sphere $\Bbb{S}^{n-1}.$

\end{theorem}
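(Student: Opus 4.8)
The plan is to extract the intrinsic geometry of $\partial M$ from the quasi-Einstein system, rewrite the claimed inequality as a Bishop--Gromov volume comparison on the Einstein boundary, and then isolate the single curvature estimate that must be supplied. The first move is forced by the system: since $u=0$ on $\partial M$, the equation $\nabla^2 u=\frac{u}{m}(Ric-\lambda g)$ gives $\nabla^2 u=0$ there, so $\partial M$ is totally geodesic, $\mathbb{II}=0$, and the Jacobi operator collapses to $J(\varphi)=\Delta_{\partial M}\varphi+Ric(\nu,\nu)\varphi$.

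Next I would compute $Ric(\nu,\nu)$ along $\partial M$. Taking the divergence of the system and using $\div Ric=\tfrac12\nabla R$ together with the trace $\Delta u=\frac{u}{m}(R-n\lambda)$ produces the identity $(R-(n-1)\lambda)\nabla u+\frac u2\nabla R+(m-1)Ric(\nabla u)=0$ on $M$. On $\partial M$, where $u=0$ and $\nabla u\parallel\nu$, contracting with $\nu$ gives $Ric(\nu,\nu)=\frac{(n-1)\lambda-R}{m-1}$, and contracting with a tangent vector gives $Ric(\nu,X)=0$. The Einstein hypothesis makes $R^{\partial M}$ constant (Schur, for $n\ge 4$; an assumption when $n=3$), and the Gauss equation $R^{\partial M}=R-2Ric(\nu,\nu)$ then forces $R|_{\partial M}$, hence $Ric(\nu,\nu)$, to be constant. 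Therefore the ground state of $J$ is the constant function and $\beta_1=-Ric(\nu,\nu)$; eliminating $R$ between the last two relations yields the clean identity $R^{\partial M}=(m+1)\beta_1+(n-1)\lambda$, i.e. $Ric^{\partial M}=\big(\lambda+\tfrac{(m+1)\beta_1}{n-1}\big)g^{\partial M}$.

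The heart of the proof --- and the step I expect to be the main obstacle --- is the sharp bound $\beta_1\ge -\frac{(n-1)\lambda}{m+n-1}$, equivalently $Ric(\nu,\nu)\le\frac{(n-1)\lambda}{m+n-1}$, equivalently the scalar curvature estimate $R|_{\partial M}\ge\frac{n(n-1)\lambda}{m+n-1}$. This is precisely where the effective dimension $m+n-1$ and the model value $\frac{\lambda}{m+n-1}$ enter. To establish it I would try two routes: a maximum principle applied to the elliptic equation for $R$ obtained by differentiating the identity above once more and tracing; or a monotonicity argument along the unit-speed normal geodesics $\gamma$ issuing from $\partial M$, tracking $\phi(t)=u'(t)^2+\frac{\lambda}{m+n-1}u(t)^2$ (writing $u(t)$ for $u\circ\gamma$), whose derivative equals $\frac{2uu'}{m}\big(Ric(\gamma',\gamma')-\frac{(n-1)\lambda}{m+n-1}\big)$ and which is constant exactly on the hemisphere. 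The subtlety is that the first-order identities pin down $Ric(\nu,\nu)$ in terms of $R$ but do not by themselves provide the sign-definite control needed for the sharp constant, so closing this gap is the crux.

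Granting this estimate, the identity of the second step gives $R^{\partial M}\ge 2\beta_1+\frac{n(n-1)\lambda}{m+n-1}>0$, hence $Ric^{\partial M}\ge\frac{2(m+n-1)\beta_1+n(n-1)\lambda}{(n-1)(m+n-1)}\,g^{\partial M}>0$. Applying the Bishop--Gromov volume comparison to the closed $(n-1)$-dimensional manifold $\partial M$ with this positive Ricci lower bound yields exactly $|\partial M|\le\big(\frac{(n-1)(n-2)(m+n-1)}{2(m+n-1)\beta_1+n(n-1)\lambda}\big)^{\frac{n-1}{2}}\omega_{n-1}$, which is (\ref{eqk8Er}). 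Finally, equality in (\ref{eqk8Er}) forces equality in the volume comparison, and the rigidity of Bishop--Gromov under a positive Ricci bound then forces $\partial M$ to be isometric to the round sphere $\mathbb{S}^{n-1}$ (with the curvature estimate of the third step simultaneously saturated, as on the hemisphere model).
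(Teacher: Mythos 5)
Your proposal is essentially the paper's own proof: compute $R_{nn}=\frac{(n-1)\lambda-R}{m-1}$ on the totally geodesic boundary, combine with the Gauss equation $R^{\partial M}=R-2R_{nn}$ and the scalar curvature bound $R\geq \frac{n(n-1)}{m+n-1}\lambda$ to get a positive constant Ricci lower bound on the Einstein boundary, and conclude via Bonnet--Myers and Bishop--Gromov, with the rigidity of the volume comparison giving the round sphere in the equality case. The one step you flag as ``the crux'' and leave open is not actually a gap: the estimate $R\geq\frac{n(n-1)}{m+n-1}\lambda$ for compact quasi-Einstein manifolds (equivalently your $\beta_1\geq-\frac{(n-1)\lambda}{m+n-1}$, which is the paper's Proposition \ref{proklk}) is a known result, recorded as (\ref{limitacaoR}) and cited from \cite[Remark 5.1]{He-Petersen-Wylie} and \cite[Proposition 3.6]{Case-Shu-Wei}; the paper does not reprove it either, and your first suggested route---a maximum principle applied to the elliptic equation (\ref{laplacianoR}) for $R$---is precisely how it is established in those references. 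The only tactical difference is that the paper never needs your exact identity $\beta_1=-Ric(\nu,\nu)$ (which rests on $R^{\partial M}$ being constant, an extra assumption when $n=3$ as you note): it simply tests the Rayleigh quotient (\ref{eq-egvle}) with $\varphi\equiv 1$ to get $\beta_1|\partial M|\leq-\int_{\partial M}R_{nn}\,dS_g$, which suffices and sidesteps the eigenfunction discussion entirely; modulo that cosmetic point, your argument is correct and yields exactly (\ref{eqk8Er}).
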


We remark that the right hand side of (\ref{eqk8Er}) is positive due to a lower bound of $\beta_{1}$ proved in Proposition \ref{proklk}. One new feature in Theorem \ref{thmJac} is that no scalar curvature condition is assumed. Furthermore, as we will see in Remark \ref{remZ}, the estimate obtained in Theorem \ref{thmJac} improves the estimate established in \cite[Theorem 1]{RTE2}.

In \cite{RTE2}, Di\'ogenes, Gadelha and Ribeiro proved a boundary estimate for compact quasi-Einstein manifolds with boundary in terms of the Brown-York mass. Here, as a consequence of the arguments in the proof of Theorem \ref{thmJac}, we shall obtain boundary estimates for $3$-dimensional compact  quasi-Einstein manifolds involving the Hawking mass. The Hawking mass\footnote{The Hawking mass is often utilized as a lower bound for the Bartnik quasi-local mass (see \cite{MiaoM,Mondino}).} of a $2$-surface $\Sigma$ is given by 
\begin{equation}
\mathfrak{m}_H(\Sigma)=\frac{|\Sigma|^{\frac{1}{2}}}{(16\pi)^{\frac{3}{2}}}\left(8\pi \chi(\Sigma)-\int_{\Sigma}H^2 dS_{g}-\frac{2}{3}\Lambda |\Sigma|\right),
\end{equation} where $\Lambda=\inf_{M}R$ and $\chi(\Sigma)$ is the Euler characteristic of $\Sigma;$ see \cite{BBT,Hawking}. The problem of finding explicit lower bounds on Hawking mass is very intriguing; see, e.g., \cite{Benedito,MiaoM,Mondino}. More precisely, we have the following result.

\begin{corollary}
\label{corJac}
Let $(M^3,\,g,\,u,\,\lambda)$ be a $3$-dimensional compact $m$-quasi-Einstein manifold with connected boundary and $m>1.$ Then the following assertions hold:
\begin{enumerate}
\item[(i)] 
\begin{equation}
\label{eqkln568}
\mathfrak{m}_H(\partial M)+\frac{4\lambda}{(m+2)}\left(\frac{|\partial M|}{16\pi}\right)^{\frac{3}{2}}\leq \frac{1}{2}\sqrt{\frac{(m+2)}{\lambda}}.
\end{equation} Moreover, if equality holds in (\ref{eqkln568}), then $\partial M$ is isometric to the round sphere $\mathbb{S}^2.$

\item[(ii)] 
\begin{equation}
\label{eqkln568PP}
\mathfrak{m}_H(\partial M)\geq 4\left(\beta_{1}+\frac{2\lambda}{m+2}\right) \left(\frac{|\partial M|}{16\pi}\right)^{\frac{3}{2}} \geq 0.
\end{equation}
\end{enumerate}
\end{corollary}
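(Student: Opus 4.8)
The plan is to exploit the fact, recalled in Section~\ref{sec2}, that the boundary of a quasi-Einstein manifold is totally geodesic, so that $\mathbb{II}=0$ and in particular $H\equiv 0$ on $\partial M$. Consequently the squared-mean-curvature term in the Hawking mass drops out and, abbreviating $r:=\big(\tfrac{|\partial M|}{16\pi}\big)^{1/2}$, a direct computation reduces the mass to
\begin{equation*}
\mathfrak{m}_H(\partial M)=\frac{\chi(\partial M)}{2}\,r-\frac{2\Lambda}{3}\,r^{3}.
\end{equation*}
Since $n=3$, the boundary is a closed oriented surface, so that $\chi(\partial M)\le 2$ and, by Gauss--Bonnet, $\int_{\partial M}R^{\partial M}\,dS=4\pi\chi(\partial M)$ with $R^{\partial M}=2K_{\partial M}$. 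Note also that a $2$-dimensional boundary is automatically Einstein, so Theorem~\ref{thmJac} is available. Both assertions will thus follow from suitable two-sided control of $\chi(\partial M)$ together with the area estimate of Theorem~\ref{thmJac}, specialized to $n=3$.

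Next I would transfer the curvature information to $\partial M$. Taking the divergence of \eqref{eq-quasi-einstein} and using the contracted second Bianchi identity yields the conservation law $(R-(n-1)\lambda)\nabla u+(m-1)Ric(\nabla u)+\tfrac{u}{2}\nabla R=0$; restricting to $\partial M$, where $u=0$ and $\nabla u$ is a nonzero multiple of $\nu$, gives $Ric(\nu,\nu)=\tfrac{(n-1)\lambda-R}{m-1}$. Combining this with the traced Gauss equation $R^{\partial M}=R-2Ric(\nu,\nu)$ eliminates the ambient scalar curvature and, for $n=3$, produces the pointwise boundary identity $Ric(\nu,\nu)=\tfrac{2\lambda-R^{\partial M}}{m+1}$. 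Because $\mathbb{II}=0$, the Jacobi operator is simply $J=\Delta_{\partial M}+Ric(\nu,\nu)$, so the variational characterization \eqref{eq-egvle}, tested exactly as in the proof of Theorem~\ref{thmJac}, feeds $\beta_1$ into an integral lower bound for $R^{\partial M}$, hence for $\chi(\partial M)$, of the shape $4\pi\chi(\partial M)\ge\big(2\beta_1+\tfrac{6\lambda}{m+2}\big)|\partial M|$. Substituting this into the reduced Hawking mass gives the first inequality of (ii), while the final sign "$\ge 0$" should reduce to the bound $\beta_1+\tfrac{2\lambda}{m+2}\ge 0$, which is forced by the same eigenvalue estimate at the equality profile.

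For part (i) I would instead use $\chi(\partial M)\le 2$ and invoke the area bound of Theorem~\ref{thmJac}, which for $n=3$ reads $|\partial M|\le \tfrac{4\pi(m+2)}{(m+2)\beta_1+3\lambda}$; together with $\beta_1\ge -\tfrac{2\lambda}{m+2}$ this yields $r\le\tfrac12\sqrt{\tfrac{m+2}{\lambda}}$. Inserting $\chi(\partial M)\le 2$ into $\mathfrak{m}_H(\partial M)+\tfrac{4\lambda}{m+2}r^{3}$ and cancelling the cosmological term $-\tfrac{2\Lambda}{3}r^3$ against $+\tfrac{4\lambda}{m+2}r^3$ then leaves the constant right-hand side $\tfrac12\sqrt{\tfrac{m+2}{\lambda}}$. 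In both parts the equality case is governed by Theorem~\ref{thmJac}: equality forces the test function in \eqref{eq-egvle} to be the first eigenfunction and all intermediate inequalities to be equalities, which pins $\partial M$ to the round $\mathbb{S}^2$ (the boundary of the model $\mathbb{S}^3_+$ with $\lambda=m+2$, on which one checks $\mathfrak{m}_H=0$).

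The main obstacle I anticipate is the handling of the cosmological term $\tfrac{2}{3}\Lambda|\partial M|$, $\Lambda=\inf_M R$: the upper estimate (i) wants $\Lambda$ bounded below while the lower estimate (ii) wants it bounded above, both by the same threshold $\tfrac{6\lambda}{m+2}$, which is precisely the constant value of $R$ on the model $\mathbb{S}^3_+$. Reconciling the two directions cannot rely on $\Lambda$ alone; it must exploit the coupling between $\Lambda$, $\beta_1$ and the area coming from the conservation law and the eigenvalue estimate, presumably through a sharp scalar-curvature bound for compact quasi-Einstein manifolds with connected boundary (via a maximum principle applied to the elliptic equation that $R$ inherits from \eqref{eq-quasi-einstein}). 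It is there, rather than in the routine Gauss--Bonnet bookkeeping, that the real work lies, with the borderline positivity $\beta_1+\tfrac{2\lambda}{m+2}\ge 0$ being the secondary delicate point, again sharp on the round sphere.
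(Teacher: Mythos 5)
There is a genuine gap, and it sits exactly where you located the ``real work'' in part (ii): the cosmological term. Your boundary identity $R_{nn}=\frac{2\lambda-R^{\partial M}}{m+1}$ is correct (it follows from (\ref{eqK1a}) and Lemma \ref{lem1A1}), and testing with $\varphi\equiv 1$ then gives $4\pi\chi(\partial M)\geq\left((m+1)\beta_1+2\lambda\right)|\partial M|$, which is in fact stronger than the shape you wrote down (the two are related by Proposition \ref{proklk}). But this substitution eliminates the ambient scalar curvature from the eigenvalue inequality entirely, so the term $-\frac{2\Lambda}{3}r^3$ in your reduced mass $\mathfrak{m}_H(\partial M)=\frac{\chi}{2}r-\frac{2\Lambda}{3}r^3$ is left orphaned: closing (ii) along your route would require an \emph{upper} bound on $\Lambda=\inf_M R$, which is not available in general, and your proposed remedy (a new sharp scalar-curvature bound via a maximum principle, reconciling upper and lower control of $\Lambda$) is both unnecessary and not forthcoming. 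The paper's fix is a rearrangement, not new machinery: do not evaluate $R$ on $\partial M$ exactly, but write $-R_{nn}=K^{\partial M}-\frac{1}{2}R$ via (\ref{EscalarBordoM}) and estimate $R\geq\Lambda$ pointwise, obtaining $\beta_1|\partial M|\leq 4\pi-\frac{\Lambda}{2}|\partial M|$. Inserted into the mass, the two occurrences of $\Lambda$ (coefficient $-\frac{1}{2}$ from the eigenvalue inequality, $-\frac{1}{6}$ from the mass) combine with \emph{net positive} coefficient, yielding $\mathfrak{m}_H(\partial M)\geq 4\left(\beta_1+\frac{\Lambda}{3}\right)\left(\frac{|\partial M|}{16\pi}\right)^{3/2}$, so only the \emph{lower} bound $\Lambda\geq\frac{6\lambda}{m+2}$ is needed --- and that is precisely (\ref{limitacaoR}) with $n=3$, already recalled in Section \ref{sec2} from He--Petersen--Wylie. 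This same observation dissolves the obstacle you flagged in part (i): the scalar-curvature bound you postulate as the hard step is not left to prove, it is (\ref{limitacaoR}); and the nonnegativity $\beta_1+\frac{2\lambda}{m+2}\geq 0$ is Proposition \ref{proklk}, not a ``delicate point at the equality profile.''

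Your part (i) itself is correct and close to the paper's: the paper gets the area bound $|\partial M|\leq\frac{4\pi(m+2)}{\lambda}$ directly by combining (\ref{eqKpo89}) with (\ref{eqK130p}), whereas you route through Theorem \ref{thmJac} at $n=3$ together with $\beta_1\geq-\frac{2\lambda}{m+2}$; since $(m+2)\beta_1+3\lambda\geq\lambda$, the two are equivalent and reproduce (\ref{eq89n10}). Note also that the paper pins down $\chi(\partial M)=2$ (not merely $\chi\leq 2$) from the positivity $R^{\partial M}\geq\frac{2\lambda}{m+2}>0$ of (\ref{eqKpo89}), and both equality discussions correctly defer to the rigidity of Theorem \ref{thmJac}. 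But as submitted, part (ii) fails at its crux, so the proposal is incomplete.
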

	
	\vspace{0.20cm}
	
	\begin{remark}
Note that (\ref{eqkln568PP}) can be seen as a Penrose inequality for the boundary (see \cite{GI}).
	\end{remark}

We now turn our attention to the Heintze-Karcher inequality. Recently, there has been significant progress concerning the Heintze-Karcher type inequalities. For instance, Brendle \cite{Brendle} obtained Heintze-Karcher type inequalities on substatic warped product spaces. By using an elliptic method, Li and Xia \cite{LX2,LX} established Heintze-Karcher type inequalities for bounded domains in general substatic Riemannian manifolds. Among other results, they proved that letting $\Omega\subset M$ be a bounded domain with connected strictly mean-convex boundary $\partial \Omega$ such that $f|_{_{\Omega}}>0$ on an $n$-dimensional substatic Riemannian manifold $(M^n,\,g,\,f),$ then the following Heintze-Karcher type inequality holds:
\begin{equation}
\label{eq1HKLX}
\int_{\partial \Omega}\frac{f}{H}\,dS\geq \frac{n}{n-1}\int_{\Omega}f\,d\Omega.
\end{equation} Moreover, if equality holds in (\ref{eq1HKLX}), then $\partial \Omega$ is umbilical (see \cite[Theorem 1.3]{LX}). Furthermore, a rigidity statement for the equality case of (\ref{eq1HKLX}) was established by Borghini, Fogagnolo and Pinamonti in \cite{BFP}; see also related results in \cite{JWXZ}.

In our next result, as an application of the generalized Reilly’s formula obtained by Li and Xia \cite{LX}, we shall establish an Heintze-Karcher type inequality for a compact domain on a Riemannian manifold satisfying (\ref{eq1F2}).

\begin{theorem}
	\label{teoHK1}
	Let $(M^n,\,g,\,u,\,\lambda)$, $n\geq 3,$ be an $n$-dimensional (compact or noncompact) $m$-quasi-Einstein manifold with $m>1$ and $Ric \geq \frac{(n-1)}{m+n-1}\lambda.$ Consider $\Omega\subset int(M)$ be a compact domain with connected strictly mean-convex boundary $\partial \Omega.$ Then we have 
	\begin{equation}
	\label{eqteo2}
	\int_{\partial \Omega}\frac{u}{H}\,dS\geq \frac{n}{n-1}\int_{\Omega}u\,d\Omega.
		\end{equation}
	Moreover, if equality holds in (\ref{eqteo2}), then $\Omega$ is a geodesic ball and $(M^n,\,g)$ is isometric, up to scaling, to either Example
${\rm (i)},$ or ${\rm (a)},$ or ${\rm (b)},$ or ${\rm (c)},$ or ${\rm (d)}.$ 
\end{theorem}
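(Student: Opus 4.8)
The plan is to show that the curvature hypothesis turns $(M^{n},g,u)$ into a substatic triple with potential $u$, so that the inequality (\ref{eqteo2}) becomes exactly the Li--Xia Heintze--Karcher inequality (\ref{eq1HKLX}) applied with the weight $f=u$. Since $u>0$ on $int(M)\supset\Omega$ and $\partial\Omega$ is connected and strictly mean-convex, the only substantive point is to verify the substatic condition, i.e. that the tensor
\[
Q:=u\,Ric-\nabla^{2}u+(\Delta u)\,g
\]
is nonnegative on $\Omega$. Inserting the quasi-Einstein equation (\ref{eq1F2}) together with its trace $\Delta u=\frac{u}{m}(R-n\lambda)$, the Ricci and metric contributions collapse into
\[
Q=\frac{u}{m}\Big[(m-1)\,Ric+\big(R-(n-1)\lambda\big)\,g\Big],
\]
so that, since $u>0$ and $m>1$, the sign of $Q$ is governed by the bracketed tensor.

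The crucial step is to control this bracket using only the assumption $Ric\geq\frac{n-1}{m+n-1}\lambda\,g$. Tracing this inequality yields the scalar-curvature bound $R\geq\frac{n(n-1)}{m+n-1}\lambda$, and combining the two estimates, for every unit vector $X$,
\[
(m-1)Ric(X,X)+R-(n-1)\lambda\;\geq\;\frac{(n-1)\lambda}{m+n-1}\big[(m-1)+n\big]-(n-1)\lambda=0,
\]
the final equality being the arithmetic identity $(m-1)+n=m+n-1$. Hence $Q\geq0$, so $(M,g,u)$ is substatic and the Li--Xia inequality (\ref{eq1HKLX}) of \cite{LX} applies verbatim to give (\ref{eqteo2}). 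It is a reassuring sign that the hypothesis on $Ric$ is \emph{precisely} the condition making $Q\geq0$, with no slack to spare.

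For the rigidity statement I would trace back when each inequality is saturated. Equality in (\ref{eqteo2}) forces equality in the Heintze--Karcher inequality (\ref{eq1HKLX}); by \cite{LX}, together with the rigidity of Borghini--Fogagnolo--Pinamonti \cite{BFP}, this makes $\partial\Omega$ umbilical and the auxiliary weighted potential radial, so that $\Omega$ is a geodesic ball. Simultaneously, the substatic defect term in the underlying generalized Reilly formula must vanish; since $Q\geq0$, this drives the estimate above to equality in all directions and pins down $Ric=\frac{n-1}{m+n-1}\lambda\,g$, so that $M$ is Einstein, i.e. has constant Ricci curvature. A nontrivial quasi-Einstein manifold with constant Ricci curvature is, by He--Petersen--Wylie \cite{He-Petersen-Wylie}, isometric up to scaling to one of the models ${\rm(i)},{\rm(a)},{\rm(b)},{\rm(c)},{\rm(d)}$, which is the asserted conclusion.

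The main obstacle will be exactly this equality analysis. One must check that the substatic term in the Li--Xia generalized Reilly formula genuinely controls the full tensor $Q$, rather than only $Q(\nabla w,\nabla w)$ for the test function $w$ (whose gradient points in a single direction at each point), so that the vanishing of that term upgrades to the pointwise Einstein condition on $\Omega$. Propagating the Einstein condition from $\Omega$ to all of $M$, using the analyticity of quasi-Einstein metrics, and then matching $\Omega$ with a geodesic ball inside the model, are the delicate points; by contrast, the inequality itself is a clean, slack-free consequence of the substatic computation.
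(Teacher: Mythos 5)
Your verification of the substatic condition is correct and coincides with the paper's opening step: the authors form the same tensor (their $P=Ric-\frac{(n-1)\lambda-R}{m-1}g$, with $(\Delta u)g-\nabla^{2}u+uRic=\frac{m-1}{m}uP$, which is your $Q$), trace the Ricci bound to get $R\geq\frac{n(n-1)}{m+n-1}\lambda$, conclude $P\geq 0$, and observe that the inequality (\ref{eqteo2}) then follows verbatim from Theorem 1.3 of \cite{LX}. So the first half of your proposal is sound and identical in substance to the paper's.

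The genuine gap is exactly the one you flag yourself, in the rigidity analysis. Vanishing of the substatic bulk term in the generalized Reilly formula only yields $Q(\xi,\xi)=0$ along the single vector field $\xi=\nabla f-\frac{\nabla u}{u}f$, and this cannot be upgraded to ``equality in all directions''; your deduction that $Ric=\frac{n-1}{m+n-1}\lambda\, g$ pointwise, hence that $M$ is Einstein, does not follow as written, and citing the black-box equality cases of \cite{LX} and \cite{BFP} does not fill this step (nor does \cite{BFP} by itself identify $\Omega$ as a geodesic ball inside the specific quasi-Einstein models). The paper circumvents the directional problem by a different mechanism: it reruns the inequality through Proposition \ref{propJC} with $V=u$ and the solution $f$ of the Dirichlet problem (\ref{eq5}), and in the equality case extracts two facts --- the Hessian identity (\ref{eq3.17}) from saturation of the Cauchy--Schwarz step (\ref{eqYU92}), and saturation of the \emph{scalar} inequality (\ref{eq3.15}), i.e.\ $\Delta u=-\frac{n\lambda}{m+n-1}u$, so that $R\equiv\frac{n(n-1)}{m+n-1}\lambda$ on $\Omega$ (implicitly using that $\xi$ cannot vanish on an open set: $f/u$ locally constant would contradict $\Delta f-\frac{\Delta u}{u}f=1$). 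Analyticity in harmonic coordinates then propagates the constant scalar curvature to all of $M$, and the Einstein condition comes not from directional control of $Q$ but from the quasi-Einstein structure identity (\ref{laplacianoR}), which at $R=\frac{n(n-1)}{m+n-1}\lambda$ forces $|\mathring{Ric}|=0$; the directional equality $Ric(\xi,\xi)=\frac{(n-1)\lambda}{m+n-1}|\xi|^{2}$ is never needed. Finally, the geodesic-ball conclusion is obtained from (\ref{eq3.17}) via Obata-type equations ($\nabla^{2}f=\frac{1}{n}g$ when $\lambda=0$, or $\nabla^{2}F=-\frac{\lambda}{m+n-1}Fg$ with $F=f-K$ when $\lambda\neq 0$) together with Lemma 3 and Theorem B of \cite{Reilly}, while Proposition 3.1 of \cite{He-Petersen-Wylie} gives the asserted model list. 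In short: your inequality argument is fine, but the equality analysis requires the paper's auxiliary Dirichlet problem (or an equivalent device) to convert the integral equality into the pointwise scalar-curvature identity, and then (\ref{laplacianoR}) --- not the substatic term --- to reach the Einstein condition.
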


\begin{remark}
As we shall see later,  a quasi-Einstein manifold satisfying the Ricci bound condition assumed in Theorem \ref{teoHK1} must be a substatic Riemannian manifold. Consequently, the Heintze-Karcher type inequality may follow from Li and Xia \cite{LX}. An  advantage here is that the rigidity statement for the equality case of (\ref{eqteo2}) is established. We also observe that such a Ricci bound condition implies that $R\geq \frac{n(n-1)}{m+n-1}\lambda,$ which holds in general for compact quasi-Einstein manifolds; see \cite[Remark 5.1]{He-Petersen-Wylie} and \cite[Proposition 3.6]{Case-Shu-Wei}. Anyway, it should be interesting to obtain an Heintze-Karcher type inequality for compact domain on quasi-Einstein manifolds by removing the Ricci bound condition. 
\end{remark}

The rest of this paper is organized as follows. In Section \ref{sec2}, we review some basic facts on $m$-quasi-Einstein manifolds. Moreover, we present a couple of key lemmas that will be used in the proofs of the main results. Section \ref{sec3} collects the proofs of Theorems \ref{Theo2}, \ref{theo4}, \ref{thmJac} and \ref{teoHK1} and Corollaries \ref{corol1} and \ref{corJac}.

\section{Background}
\label{sec2}

In this section, we review some basic facts and a couple of lemmas that will be useful in the proof of the main results. We start by recalling that the fundamental equation of an $m$-quasi-Einstein manifold $(M^n,\,g,\,u,\,\lambda)$ (possibly) with boundary $\partial M$ is given by 
\begin{equation}\label{fund-equation}
\nabla^2u=\frac{u}{m}(Ric-\lambda g),
\end{equation}
where $u>0$ in the interior of $M^n$ and $u=0$ on $\partial M.$ In particular, taking the trace of (\ref{fund-equation}) we arrive at 
\begin{equation}\label{eq-laplac}
	\Delta u=\frac{u}{m}(R-\lambda n).
\end{equation} Plugging this fact into (\ref{fund-equation}) yields
\begin{equation}\label{RicciSemtrac}
	u\mathring{Ric}=m\mathring{\nabla^2}u,
\end{equation} where $\mathring{T}=T-\frac{trT}{n}g$ stands for the traceless part of $T$.

Since $u>0$ in the interior of $M^n$ and $u=0$ on the boundary $\partial M$, one deduces that $\nu=-\frac{\nabla u}{|\nabla u|}$ is the outward unit normal vector. Besides, it follows from \cite[Propositions 2.2 and 2.3]{He-Petersen-Wylie} that $|\nabla u|\neq 0$ is constant along $\partial M.$ Thereby, we set an orthonormal frame given by $\left\{e_1,\ldots,e_{n-1},e_n=\nu\right\}.$ The second fundamental form at $\partial M$ satisfies
\begin{eqnarray}\label{fundamental second}
\mathbb{II}_{ij}=\langle \nabla _{e_{i}}\nu, e_{j}\rangle= -\frac{1}{|\nabla u|}\nabla _{i}\nabla _{j} u=0,
\end{eqnarray} for any $1\leq i,j\leq n-1.$ Consequently, $\partial M$ is totally geodesic. By Gauss equation $$R^{\partial M}_{ijkl}=R_{ijkl}-h_{il}h_{jk}+h_{ik}h_{jl},$$ we then infer
\begin{eqnarray}
 \label{GaussEq}
R^{\partial M}_{ijkl}&=&R_{ijkl}.
\end{eqnarray} Taking the trace in (\ref{GaussEq}), we have

\begin{eqnarray}
\label{RicciBordoM}
R^{\partial M}_{ik}&=&R_{ik}-R_{inkn}
\end{eqnarray}
and
\begin{eqnarray}
\label{EscalarBordoM}
R^{\partial M}&=&R-2R_{nn}.
\end{eqnarray}

We now collect some well-known equations for quasi-Einstein mani\-folds (cf. \cite{Case-Shu-Wei,DG 2019,KimKim}).

\begin{proposition}
\label{propA}
	Let $(M^n,\,g,\,u,\,\lambda),$ $n\geq 3,$ be an $n$-dimensional quasi-Einstein manifold. Then we have:
\begin{equation}
\label{eqK1a}
	\frac{1}{2}u\nabla R=-(m-1)Ric(\nabla u)-(R-(n-1)\lambda)\nabla u;
\end{equation}

\begin{equation}
\label{eqmu}
	u\Delta u+(m-1)|\nabla u|^2+\lambda u^2 =\mu,
\end{equation} where $\mu$ is a constant;
	
	\begin{eqnarray}
	\label{laplacianoR}
		\frac{1}{2}\Delta R+\frac{m+2}{2u}\langle \nabla u,\nabla R\rangle&=& -\frac{m-1}{m}\left|Ric-\frac{R}{n}g\right|^2\\
		&&-\frac{(n+m-1)}{nm}(R-n\lambda)\left(R-\frac{n(n-1)}{n+m-1}\lambda\right).\nonumber
	\end{eqnarray}
\end{proposition}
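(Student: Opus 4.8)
\section*{Proof proposal}

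The plan is to derive all three identities from the fundamental equation \eqref{fund-equation} and its trace \eqref{eq-laplac}, using two standard tools throughout: the contracted second Bianchi identity $\div Ric=\tfrac12\nabla R$ and the Bochner commutation formula $\Delta\nabla_k u=\nabla_k\Delta u+R_{kl}\nabla^l u$.

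First I would establish \eqref{eqK1a}. Differentiating the Hessian equation \eqref{fund-equation} once gives $\nabla_i\nabla_j\nabla_k u=\tfrac{\nabla_i u}{m}(R_{jk}-\lambda g_{jk})+\tfrac um\nabla_i R_{jk}$. Tracing over $i$ and $j$, the left-hand side becomes $\Delta\nabla_k u$, which I rewrite via the Bochner commutation as $\nabla_k\Delta u+R_{kl}\nabla^l u$; on the right-hand side the term $g^{ij}\nabla_i R_{jk}$ is handled by the contracted Bianchi identity, producing $\tfrac12\nabla_k R$. Substituting $\Delta u=\tfrac um(R-\lambda n)$ from \eqref{eq-laplac} and collecting the $\nabla_k u$ and $R_{kl}\nabla^l u$ contributions then isolates $\tfrac u2\nabla R$ in exactly the claimed form, with the coefficient of $\nabla u$ collapsing to $-(R-(n-1)\lambda)$.

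For \eqref{eqmu} I would show directly that the candidate function $\mu=u\Delta u+(m-1)|\nabla u|^2+\lambda u^2$ has vanishing gradient. Differentiating and feeding in \eqref{fund-equation} for $\nabla^2u$ (so that $\nabla_j u\,\nabla_k\nabla_j u=\tfrac um(R_{kj}\nabla^j u-\lambda\nabla_k u)$) and \eqref{eq-laplac} for $\Delta u$, the derivative $\nabla_k\mu$ reduces to a combination of $R_{kl}\nabla^l u$ and $\nabla_k u$ terms. Substituting \eqref{eqK1a} to replace the resulting $u^2\nabla_k R$ cancels the Ricci contribution exactly, and the leftover $\nabla_k u$ coefficient simplifies to $-m\lambda\cdot\tfrac{2u}{m}+2\lambda u=0$. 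Hence $\nabla\mu\equiv0$, so $\mu$ is constant on the connected manifold $M$.

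The identity \eqref{laplacianoR} is where the real work lies, and I expect the algebraic bookkeeping there to be the main obstacle. The approach is to take the divergence of \eqref{eqK1a}: the left side yields $\tfrac12\SP{\nabla u,\nabla R}+\tfrac u2\Delta R$, while on the right the contracted Bianchi identity handles $\div\bigl(Ric(\nabla u)\bigr)$, the fundamental equation evaluates the pairing $\langle Ric,\nabla^2u\rangle=\tfrac um(|Ric|^2-\lambda R)$, and \eqref{eq-laplac} replaces $\Delta u$. One must then gather the inner-product terms to the precise coefficient $\tfrac{m+2}{2u}$ in front of $\SP{\nabla u,\nabla R}$ (the three contributions $\tfrac12,\tfrac{m-1}{2},1$ summing correctly). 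The concluding, and most delicate, step is purely algebraic: rewriting $|Ric|^2=\bigl|Ric-\tfrac Rn g\bigr|^2+\tfrac{R^2}{n}$ and verifying that the remaining quadratic polynomial in $R$ and $\lambda$ factors as $\tfrac{n+m-1}{nm}(R-n\lambda)\bigl(R-\tfrac{n(n-1)}{n+m-1}\lambda\bigr)$, which matches \eqref{laplacianoR}.
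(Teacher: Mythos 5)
Your proposal is correct, with one caveat about how to compare it to the paper: the paper gives no proof of Proposition \ref{propA} at all --- it is collected as a list of well-known identities with citations to Case--Shu--Wei, Di\'ogenes--Gadelha and Kim--Kim --- and your derivation is essentially the standard argument found in those sources. I verified the three pivotal computations in your sketch. For \eqref{eqK1a}: tracing the differentiated fundamental equation and using the commutation and contracted Bianchi identities gives $\nabla_k\Delta u+R_{kl}\nabla^l u=\frac{1}{m}R_{kl}\nabla^l u-\frac{\lambda}{m}\nabla_k u+\frac{u}{2m}\nabla_k R$, and substituting $\nabla_k\Delta u=\frac{R-n\lambda}{m}\nabla_k u+\frac{u}{m}\nabla_k R$ indeed collapses to $\frac{u}{2}\nabla_k R=-(m-1)R_{kl}\nabla^l u-(R-(n-1)\lambda)\nabla_k u$. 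For \eqref{eqmu}: after feeding \eqref{eqK1a} into $\nabla_k\mu$ the Ricci contributions cancel exactly and the residual coefficient of $\nabla_k u$ is $\frac{2u}{m}\left[(R-n\lambda)-(R-(n-1)\lambda)-(m-1)\lambda\right]+2\lambda u=-2\lambda u+2\lambda u=0$, matching your claim; constancy of $\mu$ does use connectedness, which you correctly flagged. For \eqref{laplacianoR}: the divergence of \eqref{eqK1a} produces the three contributions $\frac{1}{2},\frac{m-1}{2},1$ to the coefficient of $\langle\nabla u,\nabla R\rangle$, summing to $\frac{m+2}{2}$, the pairing $R_{kl}\nabla^k\nabla^l u=\frac{u}{m}(|Ric|^2-\lambda R)$ is right, and the final factorization checks out: $(m-1)R(R-n\lambda)+n(R-(n-1)\lambda)(R-n\lambda)=(R-n\lambda)\left[(n+m-1)R-n(n-1)\lambda\right]$, which is $nm\cdot\frac{n+m-1}{nm}(R-n\lambda)\left(R-\frac{n(n-1)}{n+m-1}\lambda\right)$. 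The only point you should make explicit in a final write-up is that the divergence computation yields the identity \eqref{laplacianoR} multiplied through by $u$; dividing by $u$ is legitimate only on the set $\{u>0\}$, i.e.\ the interior of $M$, which is precisely where the paper applies it.
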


As a consequence of Proposition \ref{propA}, we obtain the following lemma. 

\begin{lemma}
\label{lem1A1}
Let $(M^n,\,g,\,u,\,\lambda),$ $n\geq 3,$ be an $n$-dimensional quasi-Einstein manifold with boundary $\partial M$ and $m>1.$ Then, at $\partial M,$ we have:
\begin{equation*}
(m+1)R=(m-1)R^{\partial M} + 2(n-1)\lambda.
\end{equation*}
\end{lemma}
\begin{proof}
Since $u$ vanishes on the boundary $\partial M,$ we may use (\ref{eqK1a}) to infer 
\begin{eqnarray*}
Ric(\nabla u,\nabla u)=-\frac{\left(R-(n-1)\lambda\right)}{m-1}|\nabla u|^2
\end{eqnarray*} on $\partial M.$ Consequently,
\begin{eqnarray*}
R_{nn}=-\frac{\left(R-(n-1)\lambda\right)}{m-1}.
\end{eqnarray*} Hence, by using (\ref{EscalarBordoM}), 
\begin{equation*}
\frac{R-R^{\partial M}}{2}=-\frac{\left(R-(n-1)\lambda\right)}{m-1}
\end{equation*} and the result follows. 
\end{proof}

It is known from \cite[Remark 5.1]{He-Petersen-Wylie} that the scalar curvature $R$ of a nontrivial compact quasi-Einstein manifold $M^n$ with boundary $\partial M$ must satisfy 
\begin{align}
\label{limitacaoR}
	R\geq \frac{n(n-1)}{m+n-1}\lambda.
\end{align} In particular, it follows from Eq. (\ref{laplacianoR}) that if equality occurs in (\ref{limitacaoR}), then $M^n$ is necessarily Einstein. As observed in \cite[Proposition 3.6]{Case-Shu-Wei}, (\ref{limitacaoR}) also holds in the case of compact without boundary.

As mentioned in the Introduction, the Jacobi operator $J$ acting in $\varphi$ is given by

\begin{eqnarray*}
    J({\varphi})=\Delta_{_{\partial M}}\varphi +(R_{nn}+|\mathbb{II}|^2)\varphi,
\end{eqnarray*} for all $\varphi\in C^{\infty}(\partial M).$ Besides, we consider $\beta_1$ to be the first eigenvalue of the Jacobi operator $J,$ i.e.,
\begin{eqnarray}\label{eq-egvle11}
    \beta_1=\inf_{\varphi\neq 0} \frac{-\int_{\partial M}\varphi J(\varphi) dS_g}{\int_{\partial M}\varphi^2 dS_g}.
\end{eqnarray} With aid of this notation, we get the following proposition.

\begin{proposition}
\label{proklk}
Let $(M^n,\,g,\,u,\,\lambda),$ $n\geq 3,$ be a compact $n$-dimensional quasi-Einstein manifold with boundary $\partial M$ and $m>1.$ Then we have:

\begin{equation*}
\beta_{1}\geq -\frac{(n-1)}{m+n-1}\lambda.
\end{equation*} Moreover, equality holds if and only if $R=\frac{n(n-1)}{m+n-1}\lambda$ on $\partial M$ and the eigenfunction associated to $\beta_{1}$ is constant. 
\end{proposition}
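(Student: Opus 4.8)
The plan is to combine the variational characterization of $\beta_1$ with a pointwise upper bound on the normal Ricci curvature $R_{nn}$ along $\partial M$. Since $\partial M$ is totally geodesic, we have $\mathbb{II}\equiv 0$, so the Jacobi operator reduces to $J(\varphi)=\Delta_{\partial M}\varphi+R_{nn}\varphi$. Integrating by parts on the closed hypersurface $\partial M$, the Rayleigh quotient (\ref{eq-egvle11}) can be rewritten as
\begin{equation*}
\beta_1=\inf_{\varphi\neq 0}\frac{\int_{\partial M}|\nabla_{\partial M}\varphi|^2\,dS_g-\int_{\partial M}R_{nn}\,\varphi^2\,dS_g}{\int_{\partial M}\varphi^2\,dS_g},
\end{equation*}
which isolates the role played by $R_{nn}$.

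The crucial step is to estimate $R_{nn}$ from above. From the intermediate identity established in the proof of \lemref{lem1A1}, one has $R_{nn}=-\frac{R-(n-1)\lambda}{m-1}$ on $\partial M$. Feeding in the pointwise scalar curvature bound (\ref{limitacaoR}), together with $m>1$, a short simplification yields
\begin{equation*}
R_{nn}\leq\frac{(n-1)\lambda}{m+n-1}\qquad\text{on }\partial M.
\end{equation*}
Discarding the nonnegative gradient term and inserting this bound into the Rayleigh quotient gives, for every admissible $\varphi$,
\begin{equation*}
\frac{\int_{\partial M}|\nabla_{\partial M}\varphi|^2-\int_{\partial M}R_{nn}\,\varphi^2}{\int_{\partial M}\varphi^2}\geq\frac{-\int_{\partial M}R_{nn}\,\varphi^2}{\int_{\partial M}\varphi^2}\geq-\frac{(n-1)\lambda}{m+n-1},
\end{equation*}
whence $\beta_1\geq-\frac{(n-1)\lambda}{m+n-1}$, as claimed.

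For the rigidity, I would run the equality case through the two inequalities in the display above. If $\beta_1=-\frac{(n-1)\lambda}{m+n-1}$ and $\varphi_1$ is an associated eigenfunction, then both inequalities must saturate: the first forces $\int_{\partial M}|\nabla_{\partial M}\varphi_1|^2=0$, so $\varphi_1$ is constant; the second, combined with the pointwise bound on $R_{nn}$ and the now-constant positive weight $\varphi_1^2$, forces $R_{nn}\equiv\frac{(n-1)\lambda}{m+n-1}$, i.e. $R=\frac{n(n-1)}{m+n-1}\lambda$ on $\partial M$. Conversely, if $R=\frac{n(n-1)}{m+n-1}\lambda$ on $\partial M$, then $R_{nn}$ equals the constant $\frac{(n-1)\lambda}{m+n-1}$ and any constant $\varphi$ satisfies $J(\varphi)=\frac{(n-1)\lambda}{m+n-1}\varphi$; by the lower bound just proved this constant attains the infimum, so equality holds with constant eigenfunction. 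I expect the only delicate point to be the equality bookkeeping, specifically arguing that saturation of the second inequality upgrades the pointwise bound $R_{nn}\leq\frac{(n-1)\lambda}{m+n-1}$ to an everywhere equality rather than equality merely on the support of $\varphi_1$; this is precisely where the constancy of $\varphi_1$, so that $\varphi_1^2$ is a positive constant, enters decisively.
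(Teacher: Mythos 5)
Your proposal is correct and follows essentially the same route as the paper: the integrated Rayleigh quotient for $J$ on the totally geodesic boundary, the identity $R_{nn}=\frac{(n-1)\lambda-R}{m-1}$ from (\ref{eqK1a}) with $u=0$, and the pointwise bound $R_{nn}\leq\frac{(n-1)\lambda}{m+n-1}$ via (\ref{limitacaoR}) and $m>1$. Your equality-case bookkeeping (constancy of $\varphi_1$ forcing $\varphi_1^2>0$ everywhere, which upgrades the saturation to a pointwise identity for $R_{nn}$) is exactly the content the paper compresses into its one-line remark that equality follows from its two displayed inequalities.
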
 

\begin{proof}
Firstly, on integrating by parts, we have

\begin{eqnarray}
\label{eqhjmnbv098}
-\int_{\partial M}\varphi J(\varphi) dS_{g}&=&-\int_{\partial M}\varphi\Delta_{_{\partial M}}\varphi\, dS_{g}-\int_{M}R_{nn}\varphi^2 \,dS_{g}\nonumber\\&=& \int_{\partial M}|\nabla_{_{\partial M}}\varphi|^2 \,dS_{g}-\int_{M}R_{nn}\varphi^2 \,dS_{g}.
\end{eqnarray}

On the other hand, by using (\ref{eqK1a}) on $\partial M$ and (\ref{limitacaoR}), one obtains that

\begin{eqnarray}
\label{eqhjmnbv098AA}
R_{nn}&=& \frac{(n-1)\lambda -R}{m-1}\nonumber\\&\leq & \frac{1}{m-1}\left((n-1)\lambda -\frac{n(n-1)}{m+n-1}\lambda\right)\nonumber\\&=&\frac{(n-1)}{m+n-1}\lambda.
\end{eqnarray} Plugging this into (\ref{eqhjmnbv098}) yields

\begin{eqnarray*}
-\int_{\partial M}\varphi J(\varphi) dS_{g}\geq \int_{\partial M}|\nabla_{_{\partial M}}\varphi|^2 \,dS_{g} -\frac{(n-1)}{m+n-1}\lambda\int_{\partial M}\varphi^2\, dS_{g}
\end{eqnarray*} for all $\varphi\in C^{\infty}(\partial M).$ In particular, choosing $\varphi$ such that $J(\varphi)=-\beta_{1}\varphi,$ one sees that 

\begin{eqnarray}
\label{eqhjmnbv098BB}
\beta_{1}&\geq & \frac{\int_{\partial M}|\nabla_{_{\partial M}}\varphi|^2 \,dS_{g}}{\int_{\partial M}\varphi^2\, dS_{g}}-\frac{(n-1)}{m+n-1}\lambda\nonumber\\&\geq & -\frac{(n-1)}{m+n-1}\lambda.
\end{eqnarray} The case of equality follows directly from (\ref{eqhjmnbv098AA}) and (\ref{eqhjmnbv098BB}). So, the proof is completed. 
\end{proof}

\begin{remark}
\label{remZ}
It is easy to check from Proposition \ref{proklk} that

\begin{equation}
\frac{2}{n-1}\beta_{1}+\frac{n}{m+n-1}\lambda\geq \frac{(n-2)}{m+n-1}\lambda.
\end{equation} This implies that our estimate obtained in Theorem \ref{thmJac} improves the estimate established in \cite[Theorem 1]{RTE2}.
\end{remark}

By assuming that the scalar curvature of the boundary $R^{\partial M}$ of a quasi-Einstein manifold is constant, we also obtain the following result.

\begin{proposition}
Let $(M^n,\,g,\,u,\,\lambda),$ $n\geq 3,$ be an $n$-dimensional quasi-Einstein manifold with boundary $\partial M$ and $m>1.$ Suppose that the scalar curvature of $\partial M$ is constant. Then

$$\beta_{1}=\frac{R-(n-1)\lambda}{m-1}.$$

\end{proposition}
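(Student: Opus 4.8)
The plan is to exploit the fact that, under the constant-scalar-curvature hypothesis on $\partial M$, the zeroth-order coefficient of the Jacobi operator becomes a genuine constant, which reduces the computation of $\beta_1$ to the spectrum of the Laplacian of the closed manifold $\partial M$. First I would recall from (\ref{fundamental second}) that $\partial M$ is totally geodesic, so $|\mathbb{II}|^2=0$ and the Jacobi operator collapses to
\begin{equation*}
J(\varphi)=\Delta_{_{\partial M}}\varphi+R_{nn}\varphi.
\end{equation*}
As computed in the proof of Proposition \ref{proklk}, equation (\ref{eqK1a}) evaluated on $\partial M$ (where $u=0$) yields $R_{nn}=\frac{(n-1)\lambda-R}{m-1}$.

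Next I would invoke \lemref{lem1A1}, namely $(m+1)R=(m-1)R^{\partial M}+2(n-1)\lambda$ on $\partial M$. Since $m>1$, this expresses $R$ as an affine function of $R^{\partial M}$; hence the assumption that $R^{\partial M}$ is constant forces $R$ to be constant along $\partial M$, and therefore $R_{nn}=c$ for the constant $c:=\frac{(n-1)\lambda-R}{m-1}$. With a constant zeroth-order term, integrating by parts as in (\ref{eqhjmnbv098}) gives
\begin{equation*}
-\int_{\partial M}\varphi J(\varphi)\,dS_g=\int_{\partial M}|\nabla_{_{\partial M}}\varphi|^2\,dS_g-c\int_{\partial M}\varphi^2\,dS_g,
\end{equation*}
so that the Rayleigh quotient defining $\beta_1$ becomes $\frac{\int_{\partial M}|\nabla_{_{\partial M}}\varphi|^2\,dS_g}{\int_{\partial M}\varphi^2\,dS_g}-c$. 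Minimizing over $\varphi\neq 0$ on the closed manifold $\partial M$, the Dirichlet quotient attains its infimum $0$ at the constant functions, whence $\beta_1=-c=\frac{R-(n-1)\lambda}{m-1}$, as claimed.

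The computation is essentially routine once the pieces are assembled; the only genuine point is the implication ``$R^{\partial M}$ constant $\Rightarrow R$ constant on $\partial M$'', which is exactly what \lemref{lem1A1} supplies and is where the hypothesis $m>1$ enters (to divide by $m-1$). I would also make explicit that $\partial M$ is a closed Riemannian manifold, so that the first eigenvalue of $-\Delta_{_{\partial M}}$ is $0$ and is realized by constants; this guarantees that the infimum in the Rayleigh quotient equals $0$ rather than being merely a lower bound, and it simultaneously identifies the associated eigenfunction as constant, consistent with the equality discussion in Proposition \ref{proklk}.
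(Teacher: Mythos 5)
Your proposal is correct and takes essentially the same approach as the paper: both reduce $\beta_1$ to the Rayleigh quotient of $\Delta_{_{\partial M}}$ shifted by the constant $R_{nn}=\frac{(n-1)\lambda-R}{m-1}$ (via (\ref{eqK1a}) and \lemref{lem1A1}), the paper splitting this into an upper bound with the test function $\varphi\equiv 1$ and a lower bound via the first eigenfunction, while you package the same computation by noting that the Dirichlet quotient on the closed manifold $\partial M$ has infimum $0$, attained by constants. One small correction to your commentary: in passing from ``$R^{\partial M}$ constant'' to ``$R$ constant'' via \lemref{lem1A1} one divides by $m+1$, not $m-1$; the hypothesis $m>1$ is instead needed to derive $R_{nn}=\frac{(n-1)\lambda-R}{m-1}$ from (\ref{eqK1a}) on $\partial M$.
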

\begin{proof}
Choosing $\varphi\equiv 1$ in (\ref{eq-egvle11}), we obtain 

\begin{equation*}
\beta_{1}|\partial M|\leq -\int_{\partial M}R_{nn}\,dS_{g} 
\end{equation*} and by using (\ref{eqK1a}) on $\partial M,$ one deduces that
 \begin{equation*}
\beta_{1}|\partial M|\leq \int_{\partial M}\left(\frac{R-(n-1)\lambda}{m-1}\right) \,dS_{g}.
\end{equation*} By Lemma \ref{lem1A1}, we have that $R$ is constant on $\partial M$ and hence,

\begin{equation*}
\beta_{1}\leq \frac{R-(n-1)\lambda}{m-1}.
\end{equation*}

Next, we derive the reverse inequality. It follows from  (\ref{eqhjmnbv098}) and (\ref{eqK1a}) that 

\begin{equation*}
-\int_{\partial M}\varphi J(\varphi) dS_{g}=\int_{\partial M}|\nabla_{_{\partial M}}\varphi|^2 \,dS_{g} +\frac{R-(n-1)\lambda}{m-1}\int_{\partial M}\varphi^2\,dS_{g}.
\end{equation*} Now, taking $\varphi$ such that $J(\varphi)=-\beta_{1}\varphi,$ we infer

\begin{eqnarray}
\beta_{1}&=&\frac{\int_{\partial M}|\nabla_{_{\partial M}}\varphi|^2 \,dS_{g}}{\int_{\partial M}\varphi^2\, dS_{g}} + \frac{R-(n-1)\lambda}{m-1}\nonumber\\&\geq & \frac{R-(n-1)\lambda}{m-1}\nonumber.
\end{eqnarray} Thus, we obtain the asserted equality. 

\end{proof}

In order to proceed, we recall a very important result so called {\it generalized Reilly's formula} that was established by Qiu and Xia \cite{Qiu-Xia}. 

\begin{proposition}[\cite{Qiu-Xia}]
\label{prop-qui-xia}
Let $(M^n,\,g)$ be an $n$-dimensional compact Riemannian manifold with boundary $\partial M.$ Given two functions $f$ and $u$ on $M$ and $k\in \Bbb{R},$ we have the following identity: 

\begin{eqnarray*}
&&\int_{M} f \left((\Delta u + knu)^{2}-|\nabla^{2}u+kug|^{2}\right) dV_g=(n-1)k\int_{M}(\Delta f +nkf)u^{2}dV_g \nonumber \\ &&+\int_{M}\left(\nabla ^{2}f-(\Delta f)g-2(n-1)kfg+fRic\right)(\nabla u, \nabla u)dV_g \nonumber \\
&&+\int_{\partial M}f \left[2\left(\frac{\partial u}{\partial \nu}\right)\Delta_{_{\partial M}}u+H\left(\frac{\partial u}{\partial \nu}\right)^{2}+\mathbb{II}(\nabla_{_{\partial M}}u, \nabla_{_{\partial M}} u)+2(n-1)k\left(\frac{\partial u}{\partial \nu}\right)u\right]dS_g\\
&&+ \int_{\partial M}\frac{\partial f}{\partial \nu}\left(|\nabla _{_{\partial M}}u|^{2}-(n-1)ku^{2}\right) dS_g, \nonumber
\end{eqnarray*} where $\mathbb{II}$ and $H=tr(\mathbb{II})$ stand for the second fundamental form and the mean curvature of $\partial M,$ respectively.
 \end{proposition}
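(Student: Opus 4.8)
\emph{Proof proposal.} The plan is to reduce this identity to the classical Bochner formula, treating the parameter $k$ by an algebraic reduction. First I would expand the two squares on the left-hand side. Using $\langle\nabla^2u,g\rangle=\Delta u$ and $|g|^2=n$, one obtains
\begin{equation*}
(\Delta u+knu)^2-|\nabla^2u+kug|^2=(\Delta u)^2-|\nabla^2u|^2+2k(n-1)u\Delta u+k^2n(n-1)u^2,
\end{equation*}
so the integral splits into the ``$k=0$'' Reilly integrand $f\big((\Delta u)^2-|\nabla^2u|^2\big)$ together with two lower-order contributions, linear and quadratic in $k$. I would prove the $k=0$ statement and the $k$-dependent corrections separately and then recombine them.

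For the $k=0$ part I would start from the Bochner formula
\begin{equation*}
\tfrac12\Delta|\nabla u|^2=|\nabla^2u|^2+\langle\nabla u,\nabla\Delta u\rangle+Ric(\nabla u,\nabla u),
\end{equation*}
multiply it by $f$ and integrate over $M$. The left-hand side is handled by Green's second identity, while $\int_M f\langle\nabla u,\nabla\Delta u\rangle\,dV_g$ is integrated by parts twice: once against $\operatorname{div}(f\nabla u)=f\Delta u+\langle\nabla f,\nabla u\rangle$, and once more on the resulting term $\int_M\langle\nabla f,\nabla u\rangle\Delta u\,dV_g$, using $\nabla^2u(\nabla u,\nabla f)=\tfrac12\langle\nabla f,\nabla|\nabla u|^2\rangle$. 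Collecting interior terms, the $|\nabla u|^2\Delta f$ contributions combine with $\nabla^2f(\nabla u,\nabla u)$ and $fRic(\nabla u,\nabla u)$ to produce exactly the bulk tensor $\big(\nabla^2f-(\Delta f)g+fRic\big)(\nabla u,\nabla u)$.

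The delicate point is the boundary analysis, which I expect to be the main obstacle. On $\partial M$ I would split $\nabla u=\nabla_{\partial M}u+\tfrac{\partial u}{\partial\nu}\nu$ and use the Gauss--Weingarten relations to rewrite the normal derivative $\tfrac{\partial|\nabla u|^2}{\partial\nu}=2\nabla^2u(\nabla u,\nu)$ in terms of tangential data, via the two identities $\nabla^2u(\nu,\nu)=\Delta u-\Delta_{\partial M}u-H\tfrac{\partial u}{\partial\nu}$ and $\nabla^2u(X,\nu)=X\big(\tfrac{\partial u}{\partial\nu}\big)-\mathbb{II}(X,\nabla_{\partial M}u)$ for $X$ tangent to $\partial M$. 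Substituting these, several boundary terms cancel in pairs, and the remainder is reorganized by an integration by parts on the closed manifold $\partial M$, namely
\begin{equation*}
\int_{\partial M}\big\langle\nabla_{\partial M}\big(f\tfrac{\partial u}{\partial\nu}\big),\nabla_{\partial M}u\big\rangle\,dS_g=-\int_{\partial M}f\tfrac{\partial u}{\partial\nu}\Delta_{\partial M}u\,dS_g,
\end{equation*}
which converts a single $\tfrac{\partial u}{\partial\nu}\Delta_{\partial M}u$ term into the factor $2$ of the statement and absorbs the cross terms $\langle\nabla_{\partial M}f,\nabla_{\partial M}u\rangle$. Keeping the sign conventions for $H$ and $\mathbb{II}$ consistent throughout is where the bookkeeping is heaviest.

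Finally, for the $k$-dependent corrections I would reduce everything to Green's identity applied to $u^2$ and $f$. Writing $\Delta(u^2)=2u\Delta u+2|\nabla u|^2$ and applying Green's second identity gives
\begin{equation*}
2\int_M fu\Delta u\,dV_g=\int_M u^2\Delta f\,dV_g-2\int_M f|\nabla u|^2\,dV_g-\int_{\partial M}u^2\tfrac{\partial f}{\partial\nu}\,dS_g+2\int_{\partial M}fu\tfrac{\partial u}{\partial\nu}\,dS_g.
\end{equation*}
Multiplying by $(n-1)k$ and adding the quadratic term $k^2n(n-1)\int_M fu^2\,dV_g$, one recognizes precisely the extra pieces $(n-1)k\int_M(\Delta f+nkf)u^2\,dV_g$, the correction $-2(n-1)k\int_M fg(\nabla u,\nabla u)\,dV_g$ to the bulk tensor, and the two boundary integrals $2(n-1)k\int_{\partial M}f\tfrac{\partial u}{\partial\nu}u\,dS_g$ and $-(n-1)k\int_{\partial M}\tfrac{\partial f}{\partial\nu}u^2\,dS_g$. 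Adding the $k=0$ identity to these corrections yields the stated formula.
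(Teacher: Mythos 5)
Your proposal is correct, but note that the paper itself does not prove this proposition: it is imported verbatim from Qiu--Xia \cite{Qiu-Xia}, with a pointer to \cite[Proposition 1]{Kwong-Miao} for an alternative derivation, so there is no in-paper argument to match. Compared with Qiu--Xia's original proof, which runs the whole weighted Bochner/integration-by-parts computation with the parameter $k$ carried along throughout, your decomposition is a genuine (and cleaner) variant: you verify the algebraic identity
\begin{equation*}
(\Delta u+knu)^2-|\nabla^2u+kug|^2=(\Delta u)^2-|\nabla^2u|^2+2k(n-1)u\Delta u+k^2n(n-1)u^2,
\end{equation*}
prove the $k=0$ case (the $f$-weighted Reilly formula) by the classical Bochner--Green argument, and then observe that all $k$-dependent terms follow from a single application of Green's identity to $\Delta(u^2)=2u\Delta u+2|\nabla u|^2$; I checked that the resulting corrections match the statement exactly, producing $(n-1)k\int_M(\Delta f+nkf)u^2\,dV_g$, the bulk correction $-2(n-1)k\int_M f|\nabla u|^2\,dV_g$, and the two boundary terms $2(n-1)k\int_{\partial M}fu\frac{\partial u}{\partial\nu}\,dS_g$ and $-(n-1)k\int_{\partial M}\frac{\partial f}{\partial\nu}u^2\,dS_g$. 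Your boundary bookkeeping for the $k=0$ part is also right: the two identities $\nabla^2u(\nu,\nu)=\Delta u-\Delta_{\partial M}u-H\frac{\partial u}{\partial\nu}$ and $\nabla^2u(X,\nu)=X\big(\frac{\partial u}{\partial\nu}\big)-\mathbb{II}(X,\nabla_{\partial M}u)$ make the $\nabla^2u(\nu,\nu)$ contributions cancel in pairs, and the remaining cross terms assemble into $-\big\langle\nabla_{\partial M}\big(f\frac{\partial u}{\partial\nu}\big),\nabla_{\partial M}u\big\rangle$, whose integration by parts on the closed hypersurface $\partial M$ indeed yields the coefficient $2$ in front of $\frac{\partial u}{\partial\nu}\Delta_{\partial M}u$. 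What your route buys is modularity: the parameter $k$ never enters the delicate boundary analysis, which is confined to the weighted Reilly identity; what the direct computation buys is a single self-contained calculation without the recombination step. Either way, your argument is a complete and correct proof scheme for the cited result.
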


Notice that the classical Reilly's formula is obtained by considering $f=1$ and $k=0$ in the above expression. We refer to \cite[Proposition 1]{Kwong-Miao} for an alternative proof of Proposition \ref{prop-qui-xia}. It is known that the classical Reilly’s formula is particularly efficient for manifolds with nonnegative Ricci curvature. Interestingly, the formula obtained in Proposition \ref{prop-qui-xia} is also useful for manifold that allow negative curvature. In \cite{Kwong-Miao}, Kwong and Miao have used Proposition \ref{prop-qui-xia} to prove a functional inequality on the boundary of static manifolds. Similarly, Ara\'ujo, Freitas and Santos \cite{MFM} used such a proposition to establish an integral inequality for the boundary of a bounded domain in a quasi-Einstein manifold. Di\'ogenes, Pinheiro and Ribeiro \cite{DPR} instead employed the generalized Reilly's formula to prove new sharp integral estimates for critical metrics of the volume functional on compact manifolds with boundary.

Now, we are going to use Proposition \ref{prop-qui-xia} to establish a key lemma that will be applied in the proof of Theorem \ref{theo4}.

\begin{lemma}\label{lema1}
	 Let $(M^n,\,g,\,u,\,\lambda),$ $n\geq 3,$ be a compact oriented $m$-quasi-Einstein manifold with connected boundary and constant scalar curvature. Then we have:
	\begin{equation*}
		\frac{m+1}{m}\int_{M}uRic(\nabla u,\nabla u)dV_g=-\int_{M}u|\mathring{\nabla}^2u|^2dV_g+\frac{(n-2)(n\lambda-R)+n\lambda}{mn}\int_{M}u|\nabla u|^2dV_g.
	\end{equation*}

\end{lemma}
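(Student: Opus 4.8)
The plan is to apply the generalized Reilly's formula of Qiu and Xia (Proposition \ref{prop-qui-xia}) with both of its functions $f$ and $u$ taken to be the quasi-Einstein potential, and with the constant $k$ chosen as
$$k=\frac{\lambda n-R}{mn}.$$
Since the scalar curvature $R$ is assumed constant, $k$ is a genuine constant. The first observation is that every boundary integral vanishes: the factor $f=u$ is identically zero on $\partial M$, which annihilates the first boundary integral, while in the second one both $u^{2}$ and $|\nabla_{_{\partial M}}u|^{2}$ vanish on $\partial M$ (the latter because $u$ is constant, equal to $0$, along $\partial M$). Hence only the interior terms survive.

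Next I would rewrite the integrand on the left-hand side using the orthogonal decomposition $\nabla^{2}u+kug=\mathring{\nabla}^2u+\big(\tfrac{\Delta u}{n}+ku\big)g$. Since $\mathring{\nabla}^2u$ is trace-free, this gives
$$(\Delta u+knu)^{2}-|\nabla^{2}u+kug|^{2}=n(n-1)\Big(\frac{\Delta u}{n}+ku\Big)^{2}-|\mathring{\nabla}^2u|^{2}.$$
The key point — and the reason for the particular choice of $k$ — is that, by the trace equation (\ref{eq-laplac}), $\frac{\Delta u}{n}+ku=u\big(\frac{R-\lambda n}{mn}+k\big)=0$ identically. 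This simultaneously kills the scalar term $n(n-1)\big(\frac{\Delta u}{n}+ku\big)^{2}$ on the left and forces the first interior term $(n-1)k\int_{M}(\Delta u+nku)u^{2}\,dV_g$ on the right to vanish, since $\Delta u+nku=n\big(\frac{\Delta u}{n}+ku\big)=0$. Thus the left-hand side reduces to $-\int_{M}u|\mathring{\nabla}^2u|^{2}\,dV_g$.

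It then remains to evaluate the surviving interior integral $\int_{M}\big(\nabla^{2}u-(\Delta u)g-2(n-1)kug+uRic\big)(\nabla u,\nabla u)\,dV_g$. Here I would substitute the fundamental equation (\ref{fund-equation}) to get $\nabla^{2}u(\nabla u,\nabla u)=\frac{u}{m}Ric(\nabla u,\nabla u)-\frac{\lambda}{m}u|\nabla u|^{2}$, and the trace equation (\ref{eq-laplac}) to replace $\Delta u$ by $\frac{u}{m}(R-\lambda n)$. Collecting the $uRic(\nabla u,\nabla u)$ contributions produces the factor $\frac{1}{m}+1=\frac{m+1}{m}$, exactly as in the statement, while the terms proportional to $u|\nabla u|^{2}$ combine into the coefficient $\frac{\lambda+R-\lambda n}{m}+2(n-1)k$. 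Inserting $k=\frac{\lambda n-R}{mn}$ and simplifying, this coefficient becomes $\frac{n(n-1)\lambda-(n-2)R}{mn}=\frac{(n-2)(n\lambda-R)+n\lambda}{mn}$, which yields the desired identity.

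The main obstacle is really the bookkeeping: one must identify the single value of $k$ that simultaneously removes the $\int_{M}u^{3}$-type terms (those proportional to $\big(\frac{\Delta u}{n}+ku\big)^{2}$ and to $\Delta u+nku$) and produces the precise coefficient of $\int_{M}u|\nabla u|^{2}$ claimed in the lemma. Recognizing that the constancy of $R$ is exactly what makes $\frac{\Delta u}{n}+ku$ a constant multiple of $u$ — and hence forceable to zero — is the crux; once $k$ is pinned down, everything else is a direct substitution of the quasi-Einstein equations followed by grouping like terms.
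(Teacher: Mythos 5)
Your proposal is correct and follows essentially the same route as the paper: the authors likewise apply the Qiu--Xia formula with $f=u$ and $k=\beta=\frac{n\lambda-R}{mn}$, observe that $u|_{\partial M}=0$ kills all boundary terms and that $\Delta u+nku=0$ reduces the left-hand side to $-\int_M u|\mathring{\nabla}^2u|^2\,dV_g$, and then substitute the fundamental equation (\ref{fund-equation}) and the trace identity (\ref{eq-laplac}) into the remaining interior integral. Your coefficient bookkeeping checks out, so no changes are needed.
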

\begin{proof}
We start by using (\ref{eq-laplac}) to infer that $u$ must satisfy

\begin{equation*}
\label{}
\left\{%
\begin{array}{ll}
    \displaystyle \Delta u+n\beta u=0 & \hbox{in $M,$} \\
        \displaystyle u=0 & \hbox{on $\partial M,$} \\
    \end{array}%
\right.
\end{equation*} where $\beta=\frac{(n\lambda-R)}{mn}.$ Hence, choosing $u=f$ and $k=\beta,$ it follows from Proposition \ref{prop-qui-xia} that
\begin{eqnarray*}
	-\int_{M}u\left| \nabla^2u-\frac{\Delta u}{n}g\right|^2 dV_g=\int_{M}\left(\nabla^2u-(\Delta u)g+\frac{2(n-1)}{n}(\Delta u)g+uRic\right)(\nabla u,\nabla u)dV_g,
	\end{eqnarray*} where we also have used that $u=0$ on $\partial M.$ Therefore, by (\ref{eq1F2}), one obtains that
	\begin{eqnarray*}
	-\int_{M}u|\mathring{\nabla}^2u|^2dV_g&=&\int_{M}\left (\frac{u}{m}(Ric-\lambda g)+\frac{(n-2)}{n}(\Delta u)g+uRic\right)(\nabla u,\nabla u)dV_g\\
	&=&\frac{m+1}{m}\int_{M}uRic(\nabla u,\nabla u)dV_g-\frac{\lambda}{m}\int_{M}u|\nabla u|^2dV_g\\
	&&+\frac{(n-2)}{n}\int_{M}(\Delta u)|\nabla u|^2dV_g\\
	&=&\frac{m+1}{m}\int_{M}uRic(\nabla u,\nabla u)dV_g-\frac{\lambda}{m}\int_{M}u|\nabla u|^2dV_g\\
	&&+\frac{(n-2)(R-n\lambda)}{mn}\int_{M}u|\nabla u|^2dV_g,
\end{eqnarray*} where in the last equality we have used again (\ref{eq-laplac}). So, the proof is completed. 

\end{proof}

To conclude this section, we recall another {\it generalized Reilly's formula} obtained subsequently by Li and Xia in \cite[Theorem 1.1]{LX}.

\begin{proposition}[\cite{LX}]
\label{propJC} 
Let $(M^n,\,g)$ be an $n$-dimensional smooth Riemannian manifold and $\Omega\subset M$ be a bounded domain with smooth boundary $\partial \Omega.$ Let $V\in$ $C^{\infty}(\overline{\Omega})$ be a smooth function such that $\frac{\nabla^2V}{V}$ is continuous up to $\partial \Omega$. Then for any $f\in C^{\infty}(\overline{\Omega})$, the following integral identity holds:
	\begin{eqnarray}
	\label{lkj19}
		&&\int_{\Omega}V\left(\left( \Delta f-\frac{\Delta V}{V}f\right)^2-\left |\nabla^2f-\frac{\nabla^2V}{V}f\right|^2\right)d\Omega\nonumber\\
		&=& \int_{\partial \Omega}\left(V\mathbb{II}(\nabla_{_{\partial \Omega}} f,\nabla_{_{\partial \Omega}} f)+2V\frac{\partial f}{\partial \nu}\Delta_{_{\partial \Omega}} f+VH\left (\frac{\partial f}{\partial \nu}\right)^2+\frac{\partial V}{\partial \nu}|\nabla_{_{\partial \Omega}} f|^2+2f\nabla^2V(\nabla_{_{\partial \Omega}} f,\nu)\right) dS_{g}\nonumber\\
		&&+\int_{\partial \Omega}\left(-2f\frac{\partial f}{\partial \nu}\left(\Delta_{_{\partial \Omega}} V+H\frac{\partial V}{\partial \nu}\right)-f^2\frac{\nabla^2V-\Delta Vg}{V}(\nabla V,\nu)\right)dS_{g}\nonumber\\
		&&+\int_{\Omega}(\Delta V g-\nabla^2V+VRic)\left (\nabla f-\frac{\nabla V}{V}f,\nabla f-\frac{\nabla V}{V}f\right)d\Omega.\nonumber
	\end{eqnarray}
	Here, $\nu$ is the outward unit normal vector, $\mathbb{II}$ e $H=tr(\mathbb{II})$ are the second fundamental form and the mean curvature of $\partial \Omega,$ respectively. 
\end{proposition}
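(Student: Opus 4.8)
The plan is to obtain the identity from a pointwise divergence formula that generalizes the classical Reilly identity, and then to integrate over $\Omega$ and split the resulting boundary flux into tangential and normal pieces.

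The first thing I would record is the algebraic observation
\[
\nabla f-\frac{\nabla V}{V}f=V\,\nabla\!\Big(\frac{f}{V}\Big),
\]
which suggests the substitution $\phi=f/V$. A direct computation with the product rule then gives
\[
\nabla^{2}f-\frac{\nabla^{2}V}{V}f=V\,\nabla^{2}\phi+\nabla V\otimes\nabla\phi+\nabla\phi\otimes\nabla V,
\]
and, taking traces,
\[
\Delta f-\frac{\Delta V}{V}f=V\Delta\phi+2\langle\nabla V,\nabla\phi\rangle=\frac{1}{V}\,\mathrm{div}\big(V^{2}\nabla\phi\big).
\]
Writing $A=\nabla^{2}f-\frac{\nabla^{2}V}{V}f$ and $P=\mathrm{tr}\,A=\Delta f-\frac{\Delta V}{V}f$, the left-hand integrand is exactly $V\big(P^{2}-|A|^{2}\big)$, so the whole statement is a Reilly-type formula for $\phi$ with the weight $V^{2}$; this is the structural picture I would keep in mind throughout.

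Next I would establish the relevant pointwise identity. Setting $W=\nabla f-\frac{\nabla V}{V}f$ (so $W=V\nabla\phi$), I would look for a vector field $X$ built from $P$, $A$ and $W$ — modelled on the classical choice $(\Delta f)\nabla f-\nabla^{2}f(\nabla f)$ but weighted by $V$ — for which
\[
\mathrm{div}\,X=V\big(P^{2}-|A|^{2}\big)-\big(\Delta V\,g-\nabla^{2}V+V\,Ric\big)(W,W).
\]
The computation of $\mathrm{div}\,X$ rests on the commutation formula $\mathrm{div}(\nabla^{2}h)=\nabla\Delta h+Ric(\nabla h)$ together with the product rule for the $V$-weights; the point is that the Ricci tensor enters exactly once, from differentiating the $\nabla^{2}f$ part of $A$, producing the term $V\,Ric(W,W)$, while the remaining derivatives of $V$ reassemble into $\Delta V\,g-\nabla^{2}V$. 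Integrating this identity over $\Omega$ and applying the divergence theorem converts the left side into $\int_{\partial\Omega}\langle X,\nu\rangle\,dS_{g}$.

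The final and most delicate step is to identify $\langle X,\nu\rangle$ on $\partial\Omega$ with the stated boundary integrand. Here I would decompose $\nabla f=\nabla_{_{\partial\Omega}}f+\frac{\partial f}{\partial\nu}\nu$ (and similarly for $V$), use the definitions of $\mathbb{II}$ and $H=\mathrm{tr}(\mathbb{II})$, and repeatedly invoke the divergence theorem on $\partial\Omega$ to turn tangential pairings such as $\langle\nabla_{_{\partial\Omega}}f,\nabla_{_{\partial\Omega}}(\cdot)\rangle$ into $\Delta_{_{\partial\Omega}}$-terms. Matching the pieces then reproduces $V\mathbb{II}(\nabla_{_{\partial\Omega}}f,\nabla_{_{\partial\Omega}}f)$, $VH\big(\partial f/\partial\nu\big)^{2}$, the mixed terms $2V\frac{\partial f}{\partial\nu}\Delta_{_{\partial\Omega}}f$ and $-2f\frac{\partial f}{\partial\nu}\big(\Delta_{_{\partial\Omega}}V+H\frac{\partial V}{\partial\nu}\big)$, and the $f^{2}$-term $-\frac{\nabla^{2}V-\Delta V\,g}{V}(\nabla V,\nu)$. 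I expect the main obstacle to be precisely this boundary bookkeeping: there are many terms of similar shape involving tangential gradients of $f$ and of $V$, their normal derivatives, and the extrinsic geometry of $\partial\Omega$, and matching them exactly requires careful and repeated use of the surface divergence theorem and of the Weingarten relations. A secondary point I would monitor is the hypothesis that $\nabla^{2}V/V$ be continuous up to $\partial\Omega$, which is what guarantees that every boundary integral above is well defined even where $V$ degenerates.
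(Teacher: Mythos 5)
You should first be aware that the paper contains no proof of this proposition: it is quoted verbatim from Li--Xia \cite{LX} (Theorem 1.1 there), and the surrounding text only remarks that $V=1$ recovers the classical Reilly formula. So the comparison is really with the derivation in \cite{LX}, which is the same direct integration-by-parts computation you outline. Judged on that basis, your preliminary algebra is correct ($\nabla f-\frac{\nabla V}{V}f=V\nabla\phi$ for $\phi=f/V$, the product rule for the Hessian, and $\Delta f-\frac{\Delta V}{V}f=\frac{1}{V}\,{\rm div}(V^{2}\nabla\phi)$ all check out), and the divergence identity you only conjecture does in fact exist, in exactly the weighted form you guessed: with $A=\nabla^{2}f-\frac{\nabla^{2}V}{V}f$, $P={\rm tr}\,A$ and $W=V\nabla\phi$, the field
\[
X=V\bigl(P\,W-A(W)\bigr)=V^{3}\bigl(\Delta\phi\,\nabla\phi-\nabla^{2}\phi(\nabla\phi)\bigr)+V^{2}\bigl(\langle\nabla V,\nabla\phi\rangle\nabla\phi-|\nabla\phi|^{2}\nabla V\bigr)
\]
satisfies, by the commutation rule ${\rm div}(\nabla^{2}h)=\nabla\Delta h+Ric(\nabla h)$,
\[
{\rm div}\,X=V\bigl(P^{2}-|A|^{2}\bigr)-\bigl(\Delta V\,g-\nabla^{2}V+V\,Ric\bigr)(W,W),
\]
as one verifies by expanding both sides (the $V^{3}$, $V^{2}$ and $V$ homogeneities match term by term). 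This is the precise $V$-weighted analogue of the classical field $(\Delta f)\nabla f-\nabla^{2}f(\nabla f)$, so the interior half of your plan is fully sound.

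The genuine shortfall is that the proposal stops at announcing the two computations that constitute the proof rather than performing them. Producing $X$ is the smaller half (done above); the larger half, which you explicitly defer, is the identification of $\int_{\partial\Omega}\langle X,\nu\rangle\,dS_{g}$ with the stated boundary integrand. That step is not mere bookkeeping in outline: it requires the boundary decomposition $\Delta f=\Delta_{\partial\Omega}f+H\frac{\partial f}{\partial\nu}+\nabla^{2}f(\nu,\nu)$ (and its analogue for $V$), the cancellation of the $\nabla^{2}f(\nu,\nu)$ contributions between the $P\langle W,\nu\rangle$ and $A(W,\nu)$ pieces, the Weingarten relations to produce $V\mathbb{II}(\nabla_{\partial\Omega}f,\nabla_{\partial\Omega}f)$ and $VH\left(\frac{\partial f}{\partial\nu}\right)^{2}$, and closed-surface integrations by parts to generate $2V\frac{\partial f}{\partial\nu}\Delta_{\partial\Omega}f$, $-2f\frac{\partial f}{\partial\nu}\bigl(\Delta_{\partial\Omega}V+H\frac{\partial V}{\partial\nu}\bigr)$, $2f\nabla^{2}V(\nabla_{\partial\Omega}f,\nu)$ and the $f^{2}$-term; none of this is carried out, so as submitted the argument is a correct scheme rather than a proof. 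One further caution: the hypothesis that $\nabla^{2}V/V$ extend continuously to $\partial\Omega$ is designed for the case where $V$ vanishes on (part of) the boundary, exactly the quasi-Einstein situation $V=u$, $u|_{\partial M}=0$; there your substitution $\phi=f/V$ degenerates at the boundary, so one must either run the computation directly in the variables $(f,V)$, as \cite{LX} does, or exhaust $\Omega$ by subdomains with $V>0$ and pass to the limit. That is a more substantive role for the hypothesis than the integrability remark with which you close.
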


Notice that the classical Reilly’s formula is obtained by considering $V = 1.$

\section{Proof of the main results}
\label{sec3}

In this section, we shall present the proofs of Theorems \ref{Theo2}, \ref{theo4}, \ref{thmJac} and \ref{teoHK1} and Corollaries \ref{corol1} and \ref{corJac}.

\subsection{Proof of Theorem \ref{Theo2}}
\begin{proof} To begin with, upon integrating (\ref{eqmu}) over $M,$ we use (\ref{eq-laplac}) to infer
\begin{align}\label{eq3.2}
	\mu Vol(M)=&\int_{M}\frac{u^2}{m}(R-n\lambda)dV_g+(m-1)\int_{M}|\nabla u|^2dV_g+\lambda\int_{M}u^2dV_g\nonumber \\
	\geq&-\frac{n\lambda}{m+n-1}\int_{M}u^2dV_g+(m-1)\int_{M}|\nabla u|^2dV_g+\lambda\int_{M}u^2dV_g\nonumber\\
	=&\frac{m-1}{m+n-1}\lambda\int_{M}u^2dV_g+(m-1)\int_{M}|\nabla u|^2dV_g,
\end{align}
where in the second line we have used the estimate (\ref{limitacaoR}). Next, from the Rayleigh-quotient characterization of the first nonzero eigenvalue of the
Laplacian $\lambda_1,$ one sees that

\begin{align*}
	\int_M|\nabla u|^2dV_g\geq \lambda_1\int_Mu^2dV_g,
\end{align*} where $$\lambda_1= \underset{H_0^{1,2}(M), u\not\equiv 0}{\inf} \frac{\int_{M}|\nabla u|^2dV_g}{\int_{M}u^2dV_{g}}.$$
Plugging this fact into (\ref{eq3.2}) yields
\begin{equation*}
	\mu Vol(M)\geq\frac{m-1}{m+n-1} \left(\lambda_1(m+n-1)+\lambda \right)\int_M u^2 dV_g.
\end{equation*} By Holder's inequality, one obtains that
\begin{equation}\label{eq3.4}
	\mu Vol(M)^2\geq \frac{(m-1)}{(m+n-1)}\left(\lambda_{1}(m+n-1)+\lambda\right)\left (\int_{M}udV_g\right)^2.
\end{equation}

On the other hand, combining (\ref{eq-laplac}) and (\ref{limitacaoR}), we deduce

\begin{equation}
\Delta u\geq -\frac{n\lambda }{m+n-1}u,
\end{equation} and by Stokes' theorem, one sees that $$\int_{M}u\, dV_{g}\geq \frac{m+n-1}{n\lambda}\int_{M}\left(-\Delta u\right) dV_{g}=\frac{m+n-1}{n\lambda}|\nabla u|_{\mid_{_{\partial M}}}|\partial M|.$$ Substituting this into (\ref{eq3.4}), we arrive at

\begin{align*}
	\mu Vol(M)^2\geq&
	(m-1)\frac{\left(\lambda_{1}(m+n-1)+\lambda\right)(m+n-1)}{n^2 \lambda^2}|\nabla u|^{2}_{\mid_{_{\partial M}}} |\partial M|^2.
\end{align*} Next, since $u|_{\partial M}=0,$ by evaluating (\ref{eqmu}) on $\partial M,$ one sees that $\mu=(m-1)|\nabla u|^{2}_{\mid_{_{\partial M}}}.$ Therefore, rearranging terms, we have

\begin{align}
\label{eq3.511}
	|\partial M|\leq \frac{n\lambda}{\sqrt{\left(\lambda_1(m+n-1)+\lambda\right)(m+n-1)}}Vol(M),
	\end{align} which proves (\ref{eqTeo2}). Moreover, if equality holds in (\ref{eq3.511}), then 
\begin{align*}
	R=\frac{n(n-1)}{m+n-1}\lambda.
\end{align*} Hence, it follows from (\ref{laplacianoR}) that $|\mathring{Ric}|=0$ on $M$, i.e., $M^n$ is Einstein and in this case, it suffices to apply Proposition 3.1 of \cite{He-Petersen-Wylie} to conclude that $M^n$ is isometric to the standard hemisphere $\mathbb{S}^n_+.$ This finishes the proof of the theorem. 
\end{proof}

\subsection{Proof of Theorem \ref{theo4}}

\begin{proof}
	Initially, by the classical B\"ochner's formula, we have
	\begin{align*}
		u\Delta|\nabla u|^2=2uRic(\nabla u,\nabla u)+2u\langle \nabla(\Delta u),\nabla u\rangle+2u|\nabla^2u|^2.
	\end{align*} Upon integrating this expression over $M,$ we use (\ref{eq-laplac}) and the fact that $M$ has constant scalar curvature in order to infer
	\begin{align}\label{eq3.5}
		\int_{M}u\Delta|\nabla u|^2dV_g=&2\int_{M}uRic(\nabla u,\nabla u)dV_g-\frac{2\alpha}{m}\int_{M}u|\nabla u|^2dV_g+2\int_{M}u|\nabla^2u|^2dV_g\nonumber \\
		=&2\int_{M}uRic(\nabla u,\nabla u)dV_g-\frac{2\alpha}{m}\int_{M}u| \nabla u|^2dV_g+2\int_{M}u|\mathring{\nabla}^2u|^2dV_g\nonumber\\&+\frac{2}{n}\int_{M}u(\Delta u)^2dV_g,
		\end{align} where $\mathring{\nabla}^2 u=\nabla^2 u -\frac{\Delta u}{n}g$ and $\alpha=n\lambda-R$.

		On the other hand, by using the Stokes' theorem and the fact that $u=0$ on $\partial M,$ one sees that
	\begin{align*}
	\int_{M}u\Delta|\nabla u|^2 \,dV_g=&\int_{M}|\nabla u|^2\Delta u \,dV_g\nonumber \\
	&+\int_{\partial M}\left(u\left \langle \nabla |\nabla u|^2,-\frac{\nabla u}{|\nabla u|}\right\rangle-|\nabla u|^2\left\langle \nabla u,-\frac{\nabla u}{|\nabla u|}\right\rangle\right)\,dS_g\nonumber \\
	=&-\frac{\alpha}{m}\int_{M}u|\nabla u|^2\,dV_g+|\nabla u|^3_{\mid_{_{\partial M}}}|\partial M|,
\end{align*} which compared with (\ref{eq3.5}) and using once more (\ref{eq-laplac}) yields
\begin{align*}
	|\nabla u|^3_{\mid_{_{\partial M}}}|\partial M|=&2\int_{M}uRic(\nabla u,\nabla u)dV_g-\frac{\alpha}{m}\int_{M}u| \nabla u|^2dV_g\\&+2\int_{M}u|\mathring{\nabla}^2u|^2dV_g\nonumber+\frac{2}{n}\int_{M}u(\Delta u)^2dV_g\\
	=&2\int_{M}uRic(\nabla u,\nabla u)dV_g-\frac{\alpha}{m}\int_{M}u| \nabla u|^2dV_g\\&+2\int_{M}u|\mathring{\nabla}^2u|^2dV_g\nonumber+\frac{2\alpha^2}{nm^2}\int_{M}u^3dV_g.
\end{align*}
	Now, we may invoke Lemma \ref{lema1} to infer
	\begin{align}\label{eq3.6}
	|\nabla u|^3_{\mid_{_{\partial M}}} |\partial M|=&\frac{2}{m+1}\int_{M}u|\mathring{\nabla}^2u|^2dV_g+\frac{2\alpha^2}{m^2n}\int_Mu^3dV_g\nonumber\\
	&+\frac{2mn\lambda +\alpha(n(m-1)-4m)}{n(m+1)m}\int_{M}u|\nabla u|^2dV_g.
	\end{align}
	
At the same time, by (\ref{eq-laplac}), Stokes' theorem and the fact that $u|_{\partial M}=0,$ we have
\begin{align*}
\frac{\alpha}{m}\int_{M}u^3dV_g=	-\int_{M}u^2(\Delta u)\,dV_{g}=\int_{M}\langle \nabla u^2,\nabla u\rangle\, dV_{g}=2\int_{M}u|\nabla u|^2\,dV_{g}.
\end{align*} Substituting this into (\ref{eq3.6}) gives

	\begin{align*}
	|\nabla u|^3_{\mid_{_{\partial M}}} |\partial M| =&\frac{2}{m+1}\int_{M}u|\mathring{\nabla}^2u|^2dV_g+\frac{\left(2nm\lambda+\alpha(n(m-1)+4)\right)\alpha}{2nm^2(m+1)}\int_{M}u^3dV_g,
	\end{align*}
so that,
	\begin{align}
	\label{eq3.7}
			|\nabla u|^3_{\mid_{_{\partial M}}} |\partial M| \geq \frac{\left(2nm\lambda+\alpha(n(m-1)+4)\right)\alpha}{2nm^2(m+1)}\int_{M}u^3dV_g.
	\end{align}

Proceeding, upon integrating (\ref{eqmu}) over $M^n,$ we use once more the Stokes' theorem and  (\ref{eq-laplac}) to infer 
\begin{eqnarray}\label{eq3.8}
	\mu Vol(M)&=&\int_{M}u\Delta u\,dV_{g}+(m-1)\int_{M}|\nabla u|^2 dV_{g}+\lambda \int_{M}u^2 \, dV_{g}\nonumber\\&=&-(m-2) \int_{M}u\Delta u\,dV_{g}+\lambda\int_{M}u^2\,dV_{g}\nonumber\\
	&=&\left(\frac{(m-2)\alpha+\lambda m}{m}\right)\int_{M}u^2\,dV_{g}.
\end{eqnarray} Besides, by Holder's inequality and (\ref{eq-laplac}), one sees that
\begin{align}\label{eq3.9}
\int_{M}u^2dV_g=&\int_{M}u^{\frac{3}{2}}u^{\frac{1}{2}}dV_g\nonumber\\
\leq&\left (\int_{M}u^3dV_g\right)^{\frac{1}{2}}\left(\int_{M}udV_g\right)^{\frac{1}{2}}\nonumber\\
=&\left (\int_{M}u^3dV_g\right)^{\frac{1}{2}}\left(\frac{m}{\alpha}\int_{M}(-\Delta u)dV_g\right)^{\frac{1}{2}}\nonumber\\
=&\left(\int_{M}u^3dV_g\right)^{\frac{1}{2}}\left(\frac{m|\nabla u|_{\mid_{_{\partial M}}}}{\alpha}|\partial M|\right)^{\frac{1}{2}},
\end{align}
where we have used that $\alpha=n\lambda-R$ is positive, which follows from the fact that $M^n$ has constant scalar curvature (see Remark \ref{rem2a}, or \cite[Corollary 4.3]{He-Petersen-Wylie}). Next, we already know from the proof of Theorem \ref{Theo2} that $\mu=(m-1)|\nabla u|^{2}_{\mid_{_{\partial M}}}$ and hence, plugging (\ref{eq3.9}) into (\ref{eq3.8}), one concludes that
\begin{align}
		Vol(M)\leq\left(\frac{(m-2)\alpha+\lambda m}{(m-1)\sqrt{m\alpha}|\nabla u|^{\frac{3}{2}}_{\mid_{_{\partial M}}}}\right) |\partial M|^{\frac{1}{2}}\left(\int_{M}u^3dV_g\right)^{\frac{1}{2}}.
\end{align}
 Whence, it suffices to invoke (\ref{eq3.7}) to achieve
\begin{equation}
\label{pljk13}
	Vol(M)\leq \left(\frac{(m-2)\alpha+\lambda m}{(m-1)\alpha}\right)\sqrt{\frac{2n(m+1)m}{2\lambda mn+\alpha[n(m-1)+4]}}\,|\partial M|,
\end{equation} which proves the stated inequality.

Finally, if equality holds in (\ref{pljk13}), then (\ref{eq3.7}) also becomes an equality. It follows that $|\mathring{\nabla}^2u|^2=0$ and by using (\ref{RicciSemtrac}), one sees that $M^n$ is an Einstein manifold. Thereby, we are in position to apply Proposition 3.1 of \cite{He-Petersen-Wylie} to conclude that $M^n$ is isometric, up to scaling, to the standard hemisphere $\mathbb{S}^n_+.$ So, the proof is completed. 
\end{proof}

\subsection{Proof of Corollary \ref{corol1}}
	\begin{proof} To begin with, one observes that
		\begin{align*}
			|\partial M|^n=&|\partial M||\partial M|^{n-1}.\\
			=&\frac{\alpha}{m|\nabla u|_{\mid_{_{\partial M}}}}\left(\int_{M}udV_g\right) |\partial M|^{n-1},
		\end{align*}
	where we have used (\ref{eq-laplac}) and the Stokes' theorem.
		Consequently, by Theorem \ref{theo4}, one sees that
	\begin{equation}
		|\partial M|\geq \Lambda(\alpha,m,n,u)^{\frac{1}{n}}Vol(M)^{\frac{n-1}{n}},
	\end{equation} where 
\begin{equation*}
\label{}
	\Lambda(\alpha,m,n,u) =\frac{\alpha}{m|\nabla u|_{\mid_{_{\partial M}}}}\left(\frac{(m-1)\alpha}{(m-2)\alpha+\lambda m}\sqrt{\frac{2\lambda mn+(\alpha(n(m-1)+4)}{2n(m+1)m}}\right)^{n-1}\int_{M}udV_g,
\end{equation*} as asserted. Furthermore, the case of equality follows directly from Theorem \ref{theo4}. Thus, we finish the proof of the corollary. 
	\end{proof}
	
	\subsection{Proof of Theorem \ref{thmJac}}
	
	\begin{proof}
	We start by claiming that $R^{\partial M}>0.$ Indeed, it follows from Lemma \ref{lem1A1} and (\ref{limitacaoR}) that
	
	\begin{eqnarray*}
	(m-1)R^{\partial M}&=& (m+1)R-2(n-1)\lambda\nonumber\\&\geq & \frac{(m+1)n(n-1)}{m+n-1}\lambda -2(n-1)\lambda\nonumber\\&=& \frac{(m-1)(n-1)(n-2)}{m+n-1}\lambda.
	\end{eqnarray*} Since $m>1$ and $M^n$ is compact, we then obtain 
	\begin{equation}
	\label{eqKpo89}
	R^{\partial M}\geq \frac{(n-1)(n-2)}{m+n-1}\lambda >0,
	\end{equation} as claimed. 
	
	From now on, we adapt the arguments by Barros-Silva \cite{BS}. Firstly, since $\partial M$ is totally geodesic, we obtain from (\ref{eq-egvle}) that 
	
	\begin{eqnarray*}
	\beta_{1}\int_{\partial M}\varphi^2 dS_{g}\leq -\int_{\partial M}\varphi J(\varphi) dS_{g}=\int_{\partial M} |\nabla^{\partial M}\varphi|^2 dS_{g}-\int_{\partial M}R_{nn}\varphi^2 dS_{g}
		\end{eqnarray*} for any $\varphi\in C^{\infty}(\partial M).$ Thereby, choosing $\varphi\equiv 1,$ one sees that
		
		\begin{eqnarray}
		\label{eqER1a}
		\beta_{1}|\partial M| &\leq & -\int_{\partial M}R_{nn}dS_{g}=-\frac{1}{2}\int_{\partial M}\left(R-R^{\partial M}\right) dS_{g}\nonumber\\ &\leq & \frac{1}{2}\int_{\partial M}R^{\partial M} dS_{g} -\frac{n(n-1)}{2(m+n-1)}\lambda |\partial M|,
		\end{eqnarray} where we have used (\ref{EscalarBordoM}) and (\ref{limitacaoR}).

		On the other hand, since $\partial M$ is Einstein and $R^{\partial M}>0,$ we may write $$Ric^{\partial M}=\frac{R^{\partial M}}{n-1}=(n-2)\varepsilon,$$ where $\varepsilon=\frac{R^{\partial M}}{(n-1)(n-2)}.$ Whence, it follows from Bonnet-Myer's theorem that $diam_{g_{\partial M}}(\partial M)\leq \frac{\pi}{\sqrt{\varepsilon}}.$ Furthermore, by Bishop-Gromov's theorem, we have
		
		\begin{equation}
		\label{eqEr123}
		Vol\left(B_{\frac{\pi}{\sqrt{\varepsilon}}}^{\partial M}\right)(p)\leq Vol\left(\Bbb{S}_{g_{\varepsilon}}^{n-1}\right)=\varepsilon^{-\frac{(n-1)}{2}}\omega_{n-1},
		\end{equation} for any point $p\in \partial M,$ where $g_{\varepsilon}=\frac{1}{\varepsilon}g_{\Bbb{S}^{n-1}}$ and $\omega_{n-1}$ denotes the volume of the standard unit sphere $\Bbb{S}^{n-1}.$ These facts together yield
		
		\begin{eqnarray*}
		|\partial M|\leq Vol\left(B_{\frac{\pi}{\sqrt{\varepsilon}}}^{\partial M}\right)\leq \varepsilon^{-\frac{(n-1)}{2}}\omega_{n-1},
		\end{eqnarray*} so that 
		
		\begin{equation}
		\label{eqK130p}
		R^{\partial M}\leq (n-1)(n-2)\left(\omega_{n-1}\right)^{\frac{2}{n-1}}|\partial M|^{-\frac{2}{n-1}}.
		\end{equation} This substituted into (\ref{eqER1a}) gives
		
		\begin{eqnarray*}
		2\beta_{1}\leq (n-1)(n-2)\left(\omega_{n-1}\right)^{\frac{2}{n-1}}|\partial M|^{-\frac{2}{n-1}}-\frac{n(n-1)}{m+n-1}\lambda.
		\end{eqnarray*} Consequently, 
		
		\begin{equation}
		\label{eqth6p}
		\left(\frac{2}{n-1}\beta_{1}+\frac{n}{m+n-1}\lambda\right)|\partial M|^{\frac{2}{n-1}}\leq (n-2)\left(\omega_{n-1}\right)^{\frac{2}{n-1}}.
		\end{equation} Now, it suffices to use Remark \ref{remZ} to conclude that $\frac{2}{n-1}\beta_{1}+\frac{n}{m+n-1}\lambda>0$ and hence, the stated inequality follows.

	Finally, if equality holds in (\ref{eqth6p}), then (\ref{eqEr123}) also becomes an equality. Thus, it follows from the equality case for the Bishop-Gromov’s theorem that the boundary is isometric to a round sphere $\Bbb{S}^{n-1}.$ This finishes the proof of the theorem. 
	
	\end{proof}

	\subsection{Proof of Corollary \ref{corJac}}
	\begin{proof}
	Firstly, we invoke the Gauss-Bonnet formula to deduce
	
	\begin{eqnarray*}
	\frac{1}{2}\int_{\partial M}R^{\partial M} dS_{g}=\int_{\partial M}K^{\partial M} dS_{g}=2\pi \chi(\partial M).
	\end{eqnarray*} So, it suffices to use (\ref{eqKpo89}) to conclude that $\chi(\partial M)>0$ and $\partial M$ is a $2$-sphere. Next, since $\partial M$ is Einstein, we may combine (\ref{eqK130p}) and (\ref{eqKpo89}) in order to infer 
	
	\begin{equation}
	\label{eq89n10}
	|\partial M|\leq \frac{m+2}{\lambda}4\pi.
	\end{equation}

	On the other hand, from (\ref{limitacaoR}) we have 
	
	\begin{equation}
	\label{eqpl14}
	\Lambda\geq \frac{6}{m+2}\lambda.
	\end{equation} Together with the facts that $\chi(\partial M)=2$ and that $\partial M$ is totally geodesic, this implies that 
	\begin{eqnarray}
	\label{eekjl1}
	\mathfrak{m}_{H}(\partial M)&=&\frac{|\partial M|^{\frac{1}{2}}}{(16\pi)^{\frac{3}{2}}}\left(16\pi-\frac{2}{3}\Lambda |\partial M|\right)\nonumber\\
	&\leq & \frac{|\partial M|^{\frac{1}{2}}}{(16\pi)^{\frac{1}{2}}} -\frac{4}{m+2}\lambda \left(\frac{|\partial M|}{16\pi}\right)^{\frac{3}{2}}\nonumber\\&\leq & \frac{1}{2}\sqrt{\frac{(m+2)}{\lambda}}-\frac{4}{m+2}\lambda \left(\frac{|\partial M|}{16\pi}\right)^{\frac{3}{2}},
	\end{eqnarray} where in the last inequality we have used  (\ref{eq89n10}). This therefore gives the first stated inequality. Moreover, if equality holds in (\ref{eekjl1}), it suffices to invoke the equality case of Theorem \ref{thmJac}.

	We now deal with the second inequality. It follows from the Hawking mass definition and the fact that $\chi(\partial M)=2$ that 
	
	\begin{equation}
	\label{eq8901ap}
	\mathfrak{m}_{H}(\partial M)=\frac{4|\partial M|^{\frac{1}{2}}}{\left(16\pi\right)^{\frac{3}{2}}}\left(4\pi -\frac{\Lambda}{6}|\partial M|\right).
	\end{equation}
	At the same time, choosing $\varphi\equiv 1$ in (\ref{eq-egvle}), we then use (\ref{EscalarBordoM}) and the Gauss-Bonnet formula in order to deduce
	
	\begin{eqnarray}
	\beta_{1}|\partial M|&\leq& -\int_{\partial M}R_{nn} dS_{g}\nonumber\\&=&\int_{\partial M}K^{\partial M} dS_{g}-\frac{1}{2}\int_{\partial M}R dS_{g}\nonumber\\&=& 4\pi -\frac{1}{2}\int_{\partial M}R dS_{g}\nonumber\\&\leq &4\pi -\frac{\Lambda}{2}|\partial M|\nonumber.
	\end{eqnarray} This substituted into (\ref{eq8901ap}) yields
	
	$$\mathfrak{m}_H(\partial M)\geq 4\left(\beta_{1}+\frac{\Lambda}{3}\right) \left(\frac{|\partial M|}{16\pi}\right)^{\frac{3}{2}}.$$ Now, we use (\ref{eqpl14}) to infer 	
	
	$$\mathfrak{m}_H(\partial M)\geq 4\left(\beta_{1}+\frac{2\lambda}{m+2}\right) \left(\frac{|\partial M|}{16\pi}\right)^{\frac{3}{2}} \geq 0,$$ where we have used Proposition \ref{proklk} to conclude that $\beta_{1}+\frac{2\lambda}{m+2}\geq 0.$ Thus, the proof is finished.

	\end{proof}

	\subsection{Proof of Theorem \ref{teoHK1}}
	\begin{proof}
	Initially, we consider the auxiliar tensor $$P=Ric-\frac{(n-1)\lambda -R}{m-1}g.$$ With aid of this notation, we see from (\ref{fund-equation}) and (\ref{eq-laplac}) that

\begin{eqnarray*}
(\Delta u)g-\nabla^2 u+uRic&=& \frac{u}{m}\left[\left(R-(n-1)\lambda\right)g +(m-1)Ric\right]\nonumber\\&=&\frac{(m-1)}{m}u P. 
\end{eqnarray*} In particular, since $m>1,$ $M^n$ is a substatic manifold if and only if $P\geq 0.$ 

On the other hand, it is not hard to check that the Ricci bound assumption implies

\begin{eqnarray*}
P&\geq & \left(\frac{(n-1)\lambda}{m+n-1}-\frac{\left( (n-1)\lambda -R\right)}{m-1}\right)g \nonumber\\&=&\frac{R(m+n-1)-n(n-1)\lambda}{(m-1)(m+n-1)}g.
\end{eqnarray*} By tracing the assumption on Ricci, one sees that

\begin{equation}
\label{eq3.15}
R(m+n-1)\geq n(n-1)\lambda.
\end{equation} Consequently, $P\geq 0 $ and hence, $M^n$ is a substatic Riemannian manifold. Thus, the Heintze-Karcher type inequality follows from Theorem 1.3 in \cite{LX}.

At the same time, we are going to present here another proof of the asserted inequality in order to discuss the equality case. To do so, we divide the proof into two parts and adapt the arguments by Li-Xia \cite{LX}. We start by considering the following boundary value problem

	\begin{equation}
\label{eq5}
\left\{%
\begin{array}{ll}
    \displaystyle \Delta f-\frac{\Delta u}{u}f=1 & \hbox{in $\Omega,$} \\
        \displaystyle f=0 & \hbox{on $\partial \Omega.$} \\
    \end{array}%
\right.
\end{equation} 
Notice that by \cite[Lemma 2.5]{LX}, the first eingenvalue $\lambda_1(\Delta -q,\Omega)>0,$ where $q:=\frac{\Delta u}{u}.$ Consequently, by the standard elliptic PDE theory (see \cite{BNV}), (\ref{eq5}) admits a unique smooth solution $f\in$ $C^{\infty}(\overline{\Omega}).$ Thereby, using Proposition \ref{propJC} for such a solution and choosing $V=u,$ we see that

\begin{align}
\label{eq6}
	\int_{\Omega}ud\Omega=&\int_{\Omega}u\left |\nabla^2f-\frac{\nabla^2u}{u}f\right|^2d\Omega+\int_{\partial \Omega}uH\left (\frac{\partial f}{\partial \nu}\right)^2 dS_{g}\nonumber\\
	&+\int_{\Omega}(\Delta u g-\nabla^2u+uRic)\left(\nabla f-\frac{\nabla u}{u}f,\nabla f-\frac{\nabla u}{u}f\right)d\Omega.
\end{align} By using the fact that 

\begin{equation}
\label{eqYU92}
	\left|\nabla^2f-\frac{\nabla^2u}{u}f\right|^2\geq \frac{1}{n}\left|\Delta f-\frac{\Delta u}{u}f\right|^2=\frac{1}{n},
\end{equation} we then obtain

\begin{align}
\label{eq3.14}
	\frac{n-1}{n}\int_{\Omega}ud\Omega\geq& \int_{\partial \Omega}uH\left (\frac{\partial f}{\partial \nu}\right)^2 dS_{g}\nonumber\\
	&+\int_{\Omega}(\Delta u g-\nabla^2u+uRic)\left(\nabla f-\frac{\nabla u}{u}f,\nabla f-\frac{\nabla u}{u}f\right)d\Omega.
\end{align}

Since $P\geq 0,$ one obtains that 
\begin{eqnarray}
\label{eq3.16}
	\frac{n-1}{n}\int_{\Omega}ud\Omega
	&\geq&\int_{\partial \Omega}uH\left (\frac{\partial f}{\partial \nu}\right)^2 dS_{g}.
\end{eqnarray}

Next, it follows from (\ref{eq5}) that $u\Delta f-(\Delta u)f=u$ in $\Omega.$ Thus, by Green's identity, one obtains that
\begin{eqnarray*}
	\int_{\Omega}ud\Omega&=&\int_{\Omega}u\Delta fd\Omega-\int_{\Omega}(\Delta u)fd\Omega\\
	&=&\int_{\partial \Omega}u\frac{\partial f}{\partial \nu}dS_{g}-\int_{\partial \Omega}f\frac{\partial u}{\partial \nu}dS_{g}\\
	&=&\int_{\partial \Omega}u\frac{\partial f}{\partial \nu}dS_{g},
\end{eqnarray*} where in the last line we used that  $f|_{\partial \Omega}=0.$ By H\"older's inequality and (\ref{eq3.16}), we have
\begin{eqnarray}
	\left(\int_{\Omega}ud\Omega \right)^2&=&\left(\int_{\partial \Omega}u\frac{\partial f}{\partial \nu}dS_{g} \right)^2\leq \int_{\partial \Omega}uH\left (\frac{\partial f}{\partial \nu}\right)^2 dS_{g}\int_{\partial \Omega}\frac{u}{H} dS_{g}\nonumber \\
	&\leq& \frac{n-1}{n}\int_{\Omega}ud\Omega \int_{\partial \Omega}\frac{u}{H} dS_{g}\nonumber,
\end{eqnarray} so that

\begin{align*}
	n\int_{\Omega}ud\Omega\leq (n-1)\int_{\partial \Omega}\frac{u}{H}dS_{g},
\end{align*} which proves the asserted inequality.

Now, we deal with the equality case. Indeed, if equality holds, one deduces from (\ref{eqYU92}) and (\ref{eq3.15}) that
\begin{align}
\label{eq3.17}
\nabla^2f-\frac{\nabla^2u}{u}f=\frac{1}{n}g
\end{align}
and 
\begin{align*}
		R=\frac{n(n-1)}{m+n-1}\lambda
	\end{align*} in $\Omega.$ Since $u$ and $g$ are real analytic in harmonic coordinates (cf. Proposition 2.4 in \cite{He-Petersen-Wylie}), we conclude that $$R=\frac{n(n-1)}{m+n-1}\lambda$$ in $M.$ This into (\ref{laplacianoR}) therefore implies that $(M^n,\,g)$ is an Einstein manifold. 
	
	We now have two cases to be analyzed, namely, $\lambda=0$ and $\lambda\neq 0.$ Firstly, if $\lambda=0,$ it suffices to invoke Proposition 3.1 of \cite{He-Petersen-Wylie} to conclude that $M^n$ is isometric to $[0,\infty)\times F,$ where $F$ is Ricci flat. In particular, we have $\nabla^2 u=\frac{u}{m}(Ric-\lambda g)=0$ in $M,$ which compared with (\ref{eq3.17}), gives $\nabla^2f=\frac{1}{n}g$ in $\Omega.$ Taking into account that $f|_{\partial \Omega}=0,$ it suffices to apply Lemma 3 of \cite{Reilly} to conclude that $\Omega$ is a geodesic ball.
	
On the other hand, if $\lambda\neq 0,$ then we may use again Proposition 3.1 of \cite{He-Petersen-Wylie} to infer that $M^n$ is isometric to either $\mathbb{S}_+^n,$ or $\mathbb{H}^n,$ or $[0,\infty)\times N,$ or $\mathbb{R}\times F.$ In particular, one sees that 
\begin{equation*}
	\frac{\nabla^2u}{u}=\frac{1}{m}(Ric_g-\lambda g)=-\frac{\lambda}{m+n-1}g
\end{equation*} and by (\ref{eq3.17}), we have
\begin{equation*}
	\nabla^2f=-\frac{\lambda}{m+n-1}fg+\frac{1}{n}g.
\end{equation*} By considering $F:=f-K$, where $K=\frac{m+n-1}{n\lambda},$ it is easy to check that

	\begin{equation*}
	\nabla^2F=-\frac{\lambda}{m+n-1}Fg
	\end{equation*} in $\Omega.$ Moreover, $F|_{\partial \Omega}$ is constant. Therefore, it suffices to apply Theorem B of \cite{Reilly} to conclude that $\Omega$ is a geodesic ball of constant sectional curvature  $\frac{\lambda}{m+n-1}.$ So, the proof is completed. 
	\end{proof}

\begin{acknowledgement}
The authors would like to thank the referee for the careful reading and valuable suggestions. They are also grateful to B. Leandro for his interest in this work and for pointing out the preprint \cite{Benedito}. In addition, they thank A. Barros and N. Pinheiro for the helpful conversations on the subject.
\end{acknowledgement}

\end{document}